\providecommand{\U}[1]{\protect\rule{.1in}{.1in}}
\numberwithin{equation}{section}
\newtheorem{theorem}{Theorem}[section]
\newtheorem{lemma}[theorem]{Lemma}
\newtheorem{corollary}[theorem]{Corollary}
\newtheorem{proposition}[theorem]{Proposition}
\newtheorem{remark}[theorem]{Remark}
\newtheorem{definition}[theorem]{Definition}
\def\<{\langle}
\def\>{\rangle}
\def\d{{\rm d}}
\def\E{\mathbb{E}}
\def\P{\mathbb{P}}
\def\R{\mathbb{R}}
\def\F{\mathcal{F}}
\def\p{\partial}
\def\eps{\varepsilon}
\def\law{\operatorname{Law}}
\definecolor{dark}{rgb}{0.0, 0.42, 0.0}
\begin{document}

\title{Interacting particle approximation of cross-diffusion systems}
\author{José Antonio Carrillo\footnote{Email: carrillo@maths.ox.ac.uk. Mathematical Institute, University of Oxford, Oxford OX2 6GG, UK.} \,   Shuchen Guo\footnote{Email: guo@maths.ox.ac.uk. Mathematical Institute, University of Oxford, Oxford OX2 6GG, UK.} }

\maketitle

\vspace{-20pt}

\begin{abstract}
We prove the existence of weak solutions of a class of multi-species cross-diffusion systems as well as the propagation of chaos result by means of nonlocal approximation of the nonlinear diffusion terms, coupling methods and compactness arguments. We also prove the uniqueness under further structural assumption on the mobilities by combining the uniqueness argument for viscous porous medium equations and linear Fokker-Planck equations. We show that these equations capture the macroscopic behavior of stochastic interacting particle systems if the localisation parameter is chosen logarithmically with respect to the number of particles. %We derive a class of multi-species cross-diffusion systems from stochastic interacting particles. 
\end{abstract}

\textbf{Keywords:} cross-diffusion systems, interacting particles, mean-field, propagation of chaos.

%\tableofcontents

\section{Introduction}
Multi-species cross-diffusion models are systems of coupled equations which describe the evolution of densities of $n$ different species $(n\geq 2)$. The solution of the cross-diffusion system is a vector-valued function 
$
\rho=(\rho_1,\ldots,\rho_n)
$
defined on  $\R^d$,
where $\rho_k$ is the 
density of $k$-th $(k=1,2,\ldots,n)$ species. 
We consider a class of cross-diffusion systems on $\R^d$ as follows
\begin{equation}\label{cross-diffusion1}
 \begin{cases}
  \displaystyle\p_t \rho_k-b_k\nabla\cdot \big(\rho_{k}\nabla P(\rho)\big)=\sigma\Delta \rho_{k},\\
  \rho_{k}(0)=\rho_{k,0}, \quad \rho_{k,0}\in  L^1\cap L^\infty(\R^d),
  \end{cases} k=1,2,\ldots,n,
\end{equation}
where the parameter $b_k>0$ denotes the mobility, $\sigma>0$ denotes the diffusion coefficient, and the motion is driven by pressure gradients through Darcy's law, for   $a_k>0$,
$$
P(\rho)=\left(\sum_{k=1}^{n}a_k\rho_k\right)^{m-1},\quad m\geq 2.
$$ 
The system can be written in gradient flow structure as 
\begin{equation}\label{x}
\p_t \rho_k=\frac{b_k}{a_k}\nabla\cdot \left(\rho_k\nabla \frac{\delta\mathcal{A}}{\delta\rho_k}\right),
\end{equation}
where the energy functional $\mathcal{A}$ is given by 
\begin{equation}\label{energy functional}
\mathcal{A}[\rho_1,\ldots,\rho_n]=\frac{1}{m}\int_{\R^d}\left(\sum_{k=1}^{n}a_k\rho_k\right)^{m}\d x + \sum_{k=1}^{n}\frac{a_k}{b_k}\sigma \int_{\R^d} \rho_k\log \rho_k\, \d x.    
\end{equation}
The energy functional defined above can be regularised at a formal level,
\begin{equation}\label{regular A}
\mathcal{A}_\eps[\rho_1^\eps,\ldots,\rho_n^\eps]=\frac{1}{m}\int_{\R^d}\left(\sum_{k=1}^{n}a_kV^\eps\ast\rho_k^\eps\right)^{m}\d x + \sum_{k=1}^{n}\frac{a_k}{b_k}\sigma \int_{\R^d} \rho_k^\eps\log \rho_k^\eps \,\d x,
\end{equation}
where we denote the variable of the regularised functional as $\rho^\eps=(\rho_1^\eps,\ldots,\rho_n^\eps)$, and mollifer $V^\eps$ which is obtained from nonnegative even function $V\in C^\infty_c(\R^d)$ with $\int_{\R^d}V(x)\d x=1$, by scaling
$V^{\eps}(x):=\eps^{-d} V(x/\eps  )  $. This regularised functional leads to the nonlocal equation 
\begin{equation}\label{eps cross}
\p_t \rho_k^{\eps}=\nabla\cdot \Big(b_k\rho_k^{\eps}\nabla V^{\eps} \ast \big(\sum_{l=1}^{n}a_lV^{\eps} \ast  \rho_l^{\eps}\big)^{m-1}\Big)+\sigma \Delta \rho_k^{\eps},    
\end{equation}
with initial data $\rho_k^\eps(0)=\rho_{k,0}$,  which plays an important role below as an intermediate system between the cross-diffusion system and its particle approximation. 

From a physical perspective, each subpopulation consists of a large number of interacting particles, which can represent molecules, cells, individuals, and so on depending on the application. Our motivation is to derive the cross-diffusion system \eqref{cross-diffusion1} from stochastic many-particle systems. For the sake of notational simplicity, we take the same numbers of particles in each species as $N\in\mathbb{N}$. Let $\big(\Omega,\mathcal{F}, (\mathcal{F}_t)_{t\geq 0},\P\big)$ be a filtered probability space, for any $k=1,\ldots, n$, $\big(\xi_k^i\big)_{i\geq 1}$ is a sequence of i.i.d random variables on $\R^d$ with the common law $\rho_{k,0}$, $(B_k^i)_{i\geq 1}$ are i.i.d $d$-dimension $\F_t$-Brownian motions that are independent of $\xi_k^i$. The dynamics of particle system of $k$-th species is described by the following SDEs, for $k=1,\ldots,n$ and $i=1,\cdots,N$ that 
\begin{equation}\label{cross-X}
\begin{cases}
 \d X_{k}^{i,\eps}(t)  =-b_k\Big[\nabla V^{\eps} \ast \Big(\displaystyle\sum_{l=1}^{n}\frac{a_{l}}{N}\sum_{j=1}^NV^{\eps} \big(\cdot-X_l^{j,\eps}(t)\big)\Big)^{m-1}\Big]\big(X_{k}^{i,\eps}(t) \big) \d t+\sqrt{2\sigma }\d B_k^{i}(t),   \\
  X_{k}^{i,\eps}(0)=\xi_k^i,
\end{cases}
\end{equation}
where all coefficients the same as in \eqref{cross-diffusion1}  and potential $V^{\eps}$ the same as in \eqref{regular A}. 
The distribution of particles $X_{k}^{i,\eps}(t)$ is represented by $\rho_k^{(1),N,\eps}(t)$, which is the first marginal of the joint law of $N$ particles in the $k$-th species. We will show that $\rho_k^{(1),N,\eps}$ converges to $\rho_k^\eps$ which is a measure-valued solution of \eqref{eps cross} when $N\to\infty$, and then $\rho_k^\eps$ converges to $\rho_k$ which is the weak solution of \eqref{cross-diffusion1} when $\eps\to0$.

Cross-diffusion systems have many applications in various fields, including biology, chemistry and population dynamics. We refer to \cite{J16} for more discussion on  cross-diffusion systems, especially with gradient flow structure. Assuming $\mathcal{A}$ is a functional of $\rho=(\rho_1,\ldots,\rho_n)$ given by
$$
\mathcal{A}[
\rho]=\int_{\R^d}e(\rho)\d x,
$$
the system can be represented as 
$$
\p_t \rho=\nabla\cdot \Big(B\nabla \frac{\delta \mathcal{A}}{\delta\rho}\Big),
$$
where the diffusion matrix $B(\rho) $ is positive and semidefinite. We have adopted this gradient flow structure and chosen a suitable functional $\mathcal{A}$ representing nonlinear diffusion modelling cell dynamics with volume exclusion \cite{CC06,CHS18,DB15} in tissue growth. In this work, we consider cross-diffusion models with more general pressure $P(\rho)=\big(\sum_{k=1}^{n}a_k\rho_k\big)^{m-1}$ where $m\geq2$ compared to \cite{LLP17,DJ20,DHJ23} with the addition of linear diffusion. These cross-diffusion systems are also related to aggregation-diffusion used in mathematical biology \cite{BGH85,BCPS20,CFSS18}. 

To derive macroscopic models from microscopic dynamics, one way is to take suitable scaling limit as the number of particles diverges. The mean field limit is one of the widely considered regimes. 
For deterministic cases, \cite{G16} provides a comprehensive review, while stochastic cases are discussed in \cite{S91,JW17}. In the stochastic case, the $N$-particle microscopic dynamics is governed by SDEs (1st order system) as
$$
\d X^i(t)=\frac{1}{N}\sum_{j=1}^NK\big(X^i(t)-X^j(t)\big)\d t+\sqrt{2\sigma}\d B^i(t),
$$
where $K$ is interaction kernel and $B^i$ are i.i.d standard Brownian motion. Oelschl\"{a}ger proposed the moderate interaction scaling as 
$$
\d X^i(t)=-\frac{1}{N}\sum_{j=1}^N\nabla W^N\big(X^i(t)-X^j(t)\big)\d t+\sqrt{2\sigma}\d B^i(t),
$$
where the kernel is written in gradient form and depends on the number of particles \cite{O85}. A usual choice is $W^N(x)=N^{\beta d}W(N^{\beta }x)$ where $\beta\in(0,\frac{1}{d+2})$, and $W^N$ converges to a Dirac delta $\delta_0$ when $N$ goes to infinity. 
The term "moderate" means that this nonlocal interaction is more local than the mean field regime, but when $N$ goes to infinity, particles will move in an asymptotically deterministic force field. Oelschl\"{a}ger \cite{O85,O90} rigorously derived the viscous porous medium equation from this moderate interaction. This interaction regime has recently seen rising interest, see for example \cite{JM98,BCM07,FLO19,GL23} and its application on cell-cell adhesion \cite{CHS18,CMSTT19,FBC23,DHPP24}.

In the realm of deriving cross-diffusion systems from interacting particle systems, the literature is growing. Notably, \cite{S00} focuses on the chemotaxis models and \cite{IRS12} deals with reaction-diffusion equations. It considers Maxwell–Stefan equation as the hydrodynamic limit of two-component Brownian particles in \cite{S18}, while \cite{FM15} derives  nonlocal Lotka-Volterra cross diffusion
system as large population limit of point measure-value Markov processes. \cite{DDD19} derives cross-diffusion systems of Shigesada–Kawasaki–Teramoto (SKT) type from Markov processes with mean-field scaling. Moreover, \cite{CDJ19} adopts the idea of moderate interaction, where they prove the many-particle system converges to an intermediate nonlocal diffusion system $(N\to\infty)$, and then obtain local cross-diffusion system when interaction potentials approach the Dirac delta distribution $(\eps\to0)$.
Further work \cite{CDHJ21}
derives SKT type cross-diffusion system from stochastic particle system, where a two-step limit is also applied. 
The cross-diffusion systems considered in \cite{CDJ19} are of the form
$$
\begin{cases}
\displaystyle\frac{ \p\rho_k}{\p t} =\nabla\cdot\Big(\sum_{l=1}^n a_{k l} \rho_k \nabla \rho_l\Big)+\sigma_k \Delta \rho_k\\
\rho_{k}(0)=\rho_{k,0}.
\end{cases}
$$
for smooth initial data, $\rho_{k,0}\in H^s(\R^d)$ with $s>d/2+1$, sufficiently small.

Our work generalises the paper by Figalli and Philipowski \cite{FP08}, where they consider the single-species viscous porous medium equation with exponent  $m>1$. They generalised the result of \cite{P07} and \cite{O01} for $m=2$ and proved that the (very) weak solution of the viscous porous medium equation can be obtained by the limit of solutions of the following nonlocal equations
$$
\frac{\p \rho^\eps}{\p t}=\nabla\cdot\big(\rho^\eps\nabla(V^\eps\ast(V^\eps\ast \rho^\eps)^{m-1})\big)+\Delta \rho^\eps.
$$
The connection between nonlocal equations and porous-medium type equations sheds light on the particle approximation (cf. \cite{Patacchini_blob19,BE23,CEW23,M20}). The authors in \cite{FP08} also derive the above nonlocal equation from stochastic particle system, thereby showing the convergence of the particle approximation of viscous porous medium equation.  We generalise the results to multi-species setting with $m\geq 2$, where the evolution of different species is coupled and has a more complicated structure. To prove the existence, we use nonlocal to local approximation, which can cover the cases with different mobility $b_k$. The higher regularity of each density is obtained by the linear diffusion, which is crucial to show the strong convergence in $L^1$.    While the proof of  uniqueness is more delicate where we have to assume the same mobility for different species. The result comes from the important observation that the sum of the densities satisfies a viscous porous medium equation, and the evolution of each species can be considered as a linear Fokker-Planck equation with the fixed pressure.  In terms of particle approximation, as in \cite{FP08}, we also obtain a logarithmic scale relation between the number of particles $N$ and the localisation parameter $\eps$, and present it under the framework of propagation of chaos.

%\blue{Our results hold for cross-diffusion systems of the form \eqref{cross-diffusion1} allowing general power $m\geq 2$ and general initial condition $\rho_{k,0}\in L^1\cap L^\infty(\R^d)$ without any smallness condition. On the other hand, we can only cover the cases $a_{kl}=b_ka_l$ with the same linear diffusion coefficient $\sigma_1=\cdots=\sigma_n=\sigma$ for all the populations. The main reason for this assumption lies on a crucial a priori estimate on the regularity of certain quantity due to the particular structure of this class of cross-diffusion systems \eqref{cross-diffusion1}. }

The paper is organised as follows. Section \ref{main section}  is dedicated to introducing some notations and  presenting our main result; Section \ref{proof poc-cross} delves into the error estimate between moderately interacting particle system and associated nonlinear nonlocal process; in Section \ref{proof exist rhok} we investigate the convergence from nonlocal to local cross-diffusion system, which implies the existence of the limiting cross-diffusion system. Section \ref{proof of uniqueness} shows the uniqueness of the cross-diffusion system.

\section{Preliminaries and main result}\label{main section}
Let $\mathcal{M}(\R^d)$ be the space of probability measure equipped with the following metric which measures the weak convergence in $\mathcal{M}(\R^{d})$, for $\mu_1,\mu_2\in \mathcal{M}(\R^d)$
$$
d(\mu_1,\mu_2):=
\sup_{f\in BL}\Big|\int_{\R^d}f(x)\mu_1(\d x)-\int_{\R^d}f(x)\mu_2(\d x)\Big|,
$$
where the function space $BL$ denotes the set of functions which are bounded with Lipschitz constant 1.  $B_R$ denotes the closed ball in $\R^d $ centred at origin with radius $R$, while $B_R^c$ is its complementary set. 

Recall the definition of the stochastic particle systems for species $k=1,2,\ldots,n$ and $i=1,\ldots,N$ as in \eqref{cross-X},
$$
\begin{cases}
\d X_{k}^{i,\eps}(t)  =-b_k\Big[\nabla V^{\eps} \ast \Big(\displaystyle\sum_{l=1}^{n}\frac{a_{l}}{N}\sum_{j=1}^NV^{\eps} \big(\cdot-X_l^{j,\eps}(t)\big)\Big)^{m-1}\Big](X_{k}^{i,\eps}(t) ) \d t+\sqrt{2\sigma }\d B_k^i(t), \\
X_k^{i,\eps}(0)=\xi_k^i.
\end{cases}
$$
 The map $s\mapsto s^{m-1}$ is Lipschitz continuous   when 
 $m\geq 2$, and $V^\eps$ is bounded when $\eps$ is fixed. Then the existence and uniqueness of strong solution of \eqref{cross-X}  follow by standard SDE theory \cite[Theorem 3.1.1]{PR07}. 
 In addition, we introduce the associated McKean-Vlasov type nonlinear process $Y^{i,\eps}=(Y_1^{i,\eps},\ldots,Y_n^{i,\eps})$ satisfying the SDE below,  for any $k$,
\begin{equation}\label{cross-Y}
\begin{cases}\d Y_k^{i,\eps}(t)  =-b_k\Big[\nabla V^{\eps} \ast \big(\displaystyle\sum_{l=1}^{n}a_lV^{\eps} \ast  \rho_l^{\eps}\big)^{m-1}\Big](Y_k^{i,\eps}(t)) \d t+ \sqrt{2\sigma }\d B_k^{i}(t),  \\  Y_k^{i,\eps}(0)=\xi_k^i,\vspace{1.2ex}\\
\law(Y_k^{i,\eps}(t))=\rho^\eps_k(t),
\end{cases}
\end{equation}
where we choose the random variables $\xi_k^i$ and Brownian motion $B^{i}_k(t)$ the same as in \eqref{cross-X}.  
For every fixed $\eps$, $\sum_{l=1}^{n}a_lV^{\eps} \ast  \rho_l^{\eps}$ is a bounded finite measure and  $\nabla V^\eps$ is compactly supported, which implies the Lipschitz continuity holds as 
$$
\begin{aligned}
 &\, \Big|\nabla V^{\eps} \ast \Big(\sum_{l=1}^{n}a_lV^{\eps} \ast  \rho_l^{\eps}\Big)^{m-1}(x)-\nabla V^{\eps} \ast \Big(\sum_{l=1}^{n}a_lV^{\eps} \ast  \rho_l^{\eps}\Big)^{m-1}(y)\Big|\\
\leq  &\,  \int_{\R^d} \big|\nabla V^{\eps}(x-z)-\nabla V^{\eps}(y-z)\big|\Big(\sum_{l=1}^{n}a_lV^{\eps} \ast  \rho_l^{\eps}\Big)^{m-1}\d z\\
\leq  &\,  C_\eps|x-y|.
\end{aligned} $$
Then the existence and uniqueness hold for solutions of \eqref{cross-Y}, both trajectorially and in law \cite[Theorem 1.1]{S91}. 
We abused the notation a bit that we use $(\rho_1^\eps,\ldots,\rho_n^\eps)$ denoting the distribution of solution of SDE $(Y_1^{i,\eps},\ldots,Y_n^{i,\eps})$ in \eqref{cross-Y}. But we notice that, fixing $\eps$ for any $k$ and applying It\^{o}'s formula, the distribution $\rho_k^\eps$ coincides with the solution of the nonlocal equation \eqref{eps cross} formally as
$$
\frac{\p \rho_k^{\eps}}{\p t}=\nabla\cdot \Big(b_k\rho_k^{\eps}\nabla V^{\eps} \ast \big(\sum_{l=1}^{n}a_lV^{\eps} \ast  \rho_l^{\eps}\big)^{m-1}\Big)+\sigma  \Delta \rho_k^{\eps}.    
$$
The well-posedness of nonlinear processes \eqref{cross-Y} implies the following proposition.
\begin{proposition}
Assume initial data $\rho_{k,0}$ is a probability measure and with density $\rho_{k,0}\in L^1\cap L^\infty(\R^d)$, then there exists a measure-valued solution $\rho^\eps_k\in C([0,T],\mathcal{M}(\R^d))$ of \eqref{eps cross}.
\end{proposition}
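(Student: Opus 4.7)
The plan is to exploit the well-posedness of the McKean--Vlasov system \eqref{cross-Y} stated just above and to read off the desired measure-valued solution from the time marginals of its unique solution. Concretely, I would set $\rho_k^\eps(t):=\law(Y_k^{i,\eps}(t))$, so that $\rho_k^\eps(0)=\rho_{k,0}$ by the choice of initial data in \eqref{cross-Y} and each $\rho_k^\eps(t)$ is automatically a probability measure on $\R^d$.

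The first step is to produce the weak form of \eqref{eps cross}. For a test function $\vphi\in C_c^\infty(\R^d)$, I apply It\^o's formula to $\vphi(Y_k^{i,\eps}(t))$. For fixed $\eps$ the drift $\nabla V^\eps\ast\big(\sum_l a_lV^\eps\ast\rho_l^\eps\big)^{m-1}$ is uniformly bounded on $[0,T]\times\R^d$, because $V^\eps,\nabla V^\eps\in L^\infty(\R^d)$ and each $\rho_l^\eps(t)$ is a probability measure; hence the stochastic integral is a true martingale. Taking expectations and using Fubini then yields exactly the weak formulation of \eqref{eps cross} tested against $\vphi$, which is the definition of a measure-valued solution.

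The second step is the continuity $\rho_k^\eps\in C([0,T],\mathcal{M}(\R^d))$. For any $f\in BL$ and $s,t\in[0,T]$, using the boundedness and unit Lipschitz constant of $f$,
$$
\big|\E f(Y_k^{i,\eps}(t))-\E f(Y_k^{i,\eps}(s))\big|\le \E\min\big(2,|Y_k^{i,\eps}(t)-Y_k^{i,\eps}(s)|\big).
$$
The sample paths of $Y_k^{i,\eps}$ are almost surely continuous, so the right-hand side is dominated by $2$ and converges to $0$ pointwise as $t\to s$; dominated convergence then makes the bound vanish uniformly in $f\in BL$, giving $d(\rho_k^\eps(t),\rho_k^\eps(s))\to 0$.

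I do not expect any real obstacle here: every ingredient is a direct consequence of the Lipschitz and boundedness properties of the mollified drift already displayed in the excerpt, combined with standard It\^o calculus and dominated convergence, with no delicate compactness or tightness argument needed at this regularised level. The subtleties specific to the present setting (passage to the limit $\eps\to 0$ and identification of $\rho_k^\eps$ with a function-valued density) are naturally deferred to the later sections where the existence of the limiting cross-diffusion system is established.
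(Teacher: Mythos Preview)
Your proposal is correct and follows exactly the approach the paper takes: the paper does not give a standalone proof but simply states that the proposition is implied by the well-posedness of the McKean--Vlasov system \eqref{cross-Y} together with It\^o's formula, which is precisely what you spell out. In fact you supply more detail than the paper (the martingale property of the stochastic integral and the continuity-in-time argument via dominated convergence), all of which is sound.
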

Actually, we can obtain higher regularity of solutions of the nonlocal intermediate PDE \eqref{eps cross}, but the statement in the proposition above is enough for our argument in this paper.  

The quantitative error estimate between particles and nonlinear process is as follows.
\begin{proposition}\label{poc-cross}
Under the assumptions above, the distance between the strong solutions of SDEs \eqref{cross-X} and \eqref{cross-Y} can be estimated as, for fixed $\eps$
$$
\sum_{k=1}^n\E\left[\sup _{0 \leq s\leq t}\left|X_{k}^{i,\eps}(s)-Y_{k}^{i,\eps}(s)\right|^2\right] \leq \frac{C(\eps,t)}{N},
$$
where the constant $C(\eps,t)$ can be made explicitly.
\end{proposition}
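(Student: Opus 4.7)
My plan is to run a synchronous-coupling argument, exploiting that the particle system \eqref{cross-X} and the nonlinear process \eqref{cross-Y} share the same Brownian motions and the same initial data. After subtracting the two SDEs the martingale parts cancel, leaving
$$
X_k^{i,\eps}(t)-Y_k^{i,\eps}(t) = -b_k\int_0^t \bigl[ F_k^{N,\eps}(X_k^{i,\eps}(s),s) - F_k^\eps(Y_k^{i,\eps}(s),s)\bigr]\,\d s,
$$
where I write $F_k^{N,\eps}$ for the drift built from the empirical measures of the $X$-particles and $F_k^\eps$ for the drift built from the law $\rho^\eps$. Introducing the auxiliary drift $\tilde F_k^{N,\eps}$, defined exactly like $F_k^{N,\eps}$ but with the $X$-particles replaced by the corresponding $Y$-particles, I decompose the integrand as $\mathrm{(I)}+\mathrm{(II)}+\mathrm{(III)}$ with $\mathrm{(I)} := F_k^{N,\eps}(X_k^{i,\eps},s)-\tilde F_k^{N,\eps}(X_k^{i,\eps},s)$, $\mathrm{(II)} := \tilde F_k^{N,\eps}(X_k^{i,\eps},s)-\tilde F_k^{N,\eps}(Y_k^{i,\eps},s)$, and $\mathrm{(III)} := \tilde F_k^{N,\eps}(Y_k^{i,\eps},s)-F_k^\eps(Y_k^{i,\eps},s)$.

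The key a priori bound that makes the nonlinearity $s\mapsto s^{m-1}$ harmless is $V^\eps\ast\mu\le\|V^\eps\|_\infty$ for every probability measure $\mu$, so the base of the $(m-1)$-th power is bounded by $M_\eps := (\sum_l a_l)\|V^\eps\|_\infty$. The mean value theorem then yields Lipschitz continuity of $s\mapsto s^{m-1}$ on $[0,M_\eps]$ with constant $(m-1)M_\eps^{m-2}$. Combining this with the elementary inequality $\int|V^\eps(z-a)-V^\eps(z-b)|\,\d z\le \|\nabla V^\eps\|_{L^1}|a-b|$, term $\mathrm{(I)}$ is controlled by $C_\eps\,\frac{1}{N}\sum_{l,j}|X_l^{j,\eps}(s)-Y_l^{j,\eps}(s)|$, while term $\mathrm{(II)}$ is controlled by $C_\eps|X_k^{i,\eps}(s)-Y_k^{i,\eps}(s)|$ via the spatial Lipschitz estimate already displayed in the excerpt.

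For $\mathrm{(III)}$ I exploit that $\{Y_l^{j,\eps}(s)\}_{j=1,\dots,N}$ are i.i.d.\ with law $\rho_l^\eps(s)$, so for each fixed $z$ the quantity $\frac{1}{N}\sum_j V^\eps(z-Y_l^{j,\eps}(s))-V^\eps\ast\rho_l^\eps(z)$ is a centred average of $N$ independent variables bounded by $\|V^\eps\|_\infty$, hence has variance at most $\|V^\eps\|_\infty^2/N$. Pulling the nonlinearity out with the same mean value bound, applying Cauchy--Schwarz with the weight $|\nabla V^\eps(y-z)|\,\d z$, and then exchanging integration with expectation by Fubini yields $\E|\mathrm{(III)}(s)|^2\le C_\eps/N$ uniformly in the evaluation point $y=Y_k^{i,\eps}(s)$.

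To close the argument I square the integral identity, apply Jensen in the time integral, pass to $\sup_{s\le t}$ and expectation, and sum over $k$. Exchangeability of the pairs $(X_l^{j,\eps},Y_l^{j,\eps})$ in the label $j$ makes $\E|X_l^{j,\eps}(s)-Y_l^{j,\eps}(s)|^2$ independent of $j$, and after summation I obtain a closed integral inequality of the form $F(t)\le C_\eps\,t\int_0^t F(s)\,\d s + C_\eps(t)/N$ for $F(t):=\sum_k\E\bigl[\sup_{s\le t}|X_k^{i,\eps}(s)-Y_k^{i,\eps}(s)|^2\bigr]$, after which a Grönwall-type argument gives the claim with explicit $C(\eps,t)$. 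The main bookkeeping obstacle, as is typical in such moderate-interaction estimates, will be keeping careful track of the $\eps$-dependence propagating through $\|V^\eps\|_\infty$, $\|\nabla V^\eps\|_{L^1}$ and $\|\nabla^2 V^\eps\|_{L^\infty}$; the nonlinearity $s\mapsto s^{m-1}$ itself is tamed by the uniform bound $M_\eps$ coming from mollifying a probability measure.
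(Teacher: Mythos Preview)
Your proposal is correct and follows essentially the same route as the paper: synchronous coupling, the same three-term triangle decomposition (your $\mathrm{(I)},\mathrm{(II)},\mathrm{(III)}$ are exactly the paper's $J_1,J_2,J_3$), Lipschitz bounds via $\|V^\eps\|_\infty^{m-2}$ and $\|\nabla V^\eps\|$ for the first two pieces, a law-of-large-numbers variance bound for the third, and Gr\"onwall to close. The one point you gloss over is that in $\mathrm{(III)}$ the evaluation point $y=Y_k^{i,\eps}(s)$ is itself one of the particles in the empirical sum when $l=k$, $j=i$, so the ``centred average of $N$ independent variables'' claim needs a conditioning step on $Y_k^{i,\eps}$ and a separate count of the $O(N)$ diagonal pairs $(j,j')$; the paper does this explicitly, and it costs nothing in the final rate.
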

See Section \ref{proof poc-cross} for the proof of this proposition. In terms of the distribution of particles, we have the following remark.
\begin{remark}
    
By the definition of 2-Wasserstein metric, for any $k$-th species, the distance between the one-particle distribution $\rho_{k}^{(1),N,\eps}(t)=\law(X_{k}^{i,\eps}(t))$ and $\rho_k^{\eps}(t)=\law(Y_{k}^{i,\eps}(t))$ can be estimated as follows,  for $t\in[0,T]$,
$$
\begin{aligned}
 &\, W_2^2(\rho_{k}^{(1),N,\eps}(t),\rho_{k}^{\eps}(t))\leq \sum_{k=1}^n W_2^2(\rho_{k}^{(1),N,\eps}(t),\rho_{k}^{\eps}(t))\leq \sum_{k=1}^n\E\left[\left|X_{k}^{i,\eps}(t)-Y_{k}^{i,\eps}(t)\right|^2\right] \\ \leq  &\, \sum_{k=1}^n\E\left[\sup _{0 \leq s\leq T}\left|X_{k}^{i,\eps}(s)-Y_{k}^{i,\eps}(s)\right|^2\right] \leq \frac{C(\eps,T)}{N}.
\end{aligned}
$$
\end{remark}
\begin{remark}\label{Neps}
According to Proposition \ref{poc-cross} and the expression of $C(\eps,t)$ (see \eqref{eps N relation}), we can take suitable logarithmic dependence of $\eps$ and $N$ as $\eps=\eps(N)$ which goes to $0$ when $N$ goes to $\infty$. Then it holds for $t\in[0,T]$,
$$
W_2(\rho_{k}^{(1),N,\eps(N)}(t),\rho_{k}^{\eps(N)}(t))\to 0, \text{ as } N\to\infty.
$$
\end{remark}
Now we define the weak solution of the cross-diffusion system \eqref{cross-diffusion1}:
\begin{definition}\label{definition}
A weak solution $\rho=(\rho_1,\ldots,\rho_n)$ of the cross-diffusion system \eqref{cross-diffusion1} on the time interval $[0,T]$ satisfies that, for each species $k$,
    \begin{itemize}
    \item [(1)] $\rho_k\in C([0,T],\mathcal{M}(\R^d))$ is a measure-valued solution with initial data $\rho_{k,0}\in L^1\cap L^\infty(\R^d)$;
        \item[(2)] for almost every $t\in[0,T]$, $\rho_k(t)$ is absolutely continuous with respect to Lebesgue measure (for simplicity which is also denoted by $\rho_k(t)$),  and  $\rho_k\in L^m([0,T]\times \R^d)$; 
        \item[(3)] $\big(\sum_{l=1}^{n}a_{l} \rho_l\big)^{m-1}\in L^{\frac{m}{m-1}}(0,T;W^{1,\frac{m}{m-1}}(\R^d))$;
     \item[(4)] for almost any $t\in[0,T]$ and $f\in C^1([0,T], C^2_b(\R^d))$, it holds
     \begin{equation}\label{weaksolution}
\begin{aligned}
      &\int_{\R^d}f(t,x)\rho_k(t,x)
\d x+  \int_0^t\int_{\R^d}\p_s f(s,x)\rho_k(s,x)\d x\d s=\int_{\R^d} f(0,x)\rho_{k,0}(x)\d x   \\
  +&\sigma \int_0^t\int_{\R^d}\Delta f(s,x) \rho_k(s,x)\d x\d s  -\int_0^t\int_{\R^d}b_k\rho_k(s,x)\nabla f(s,x) \cdot\nabla\big(\sum_{l=1}^{n}a_{l} \rho_l(s,x)\big)^{m-1} \d x\d s.
    \end{aligned}     
     \end{equation}
    \end{itemize}
    \end{definition}
The following theorems give the well-posedness of the cross-diffusion system \eqref{cross-diffusion1}.
\begin{theorem}[Existence]\label{exist rhok}
Up to a subsequence, the solutions of nonlocal equation \eqref{eps cross} $(\rho^{\eps})_{\eps>0}$ converges in $C([0,T],\mathcal{M}(\R^d))$ to $\rho$ of a weak solution of \eqref{cross-diffusion1}.    
\end{theorem}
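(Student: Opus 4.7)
The plan is a compactness-and-identification argument driven by $\eps$-uniform a priori estimates coming from the gradient-flow structure of the regularised energy $\mathcal{A}_\eps$. The starting point is that since $\rho_{k,0}\in L^1\cap L^\infty(\R^d)$, the mollification $V^\eps\ast \rho_{k,0}$ is bounded uniformly in $\eps$ in $L^1\cap L^\infty$, so $\mathcal{A}_\eps[\rho_0]$ stays bounded uniformly in $\eps$. The formal dissipation identity associated to \eqref{eps cross},
\[
\mathcal{A}_\eps[\rho^\eps(t)] + \sum_{k=1}^n \frac{b_k}{a_k} \int_0^t\!\int_{\R^d}\rho_k^\eps\Bigl|\nabla\tfrac{\delta \mathcal{A}_\eps}{\delta\rho_k}\Bigr|^2\,dx\,ds \;=\; \mathcal{A}_\eps[\rho_0],
\]
together with separately tracking the entropy $\int \rho_k^\eps\log\rho_k^\eps$ to absorb the mixed cross-term by Cauchy-Schwarz, yields $\eps$-uniform bounds on $\sup_t\int(\sum_l a_l V^\eps\ast\rho_l^\eps)^m\,dx$, on $\sum_k\int_0^T\!\int \rho_k^\eps|\nabla V^\eps\ast P^\eps|^2\,dx\,dt$ with $P^\eps := (\sum_l a_l V^\eps\ast\rho_l^\eps)^{m-1}$, and on $\sum_k\int_0^T\!\int|\nabla\sqrt{\rho_k^\eps}|^2\,dx\,dt$. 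A second-moment bound $\sup_t\int|x|^2\rho_k^\eps\,dx\le C$ follows by testing \eqref{eps cross} against a smooth truncation of $|x|^2$ and closing via Gronwall and Cauchy-Schwarz against the flux dissipation.

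\textbf{Compactness.} Spatial tightness comes from the second-moment bound; time equicontinuity in $C([0,T],\mathcal{M}(\R^d))$ is obtained by testing \eqref{eps cross} against $f\in C_c^\infty(\R^d)$ and bounding $|\langle f,\rho_k^\eps(t)\rangle-\langle f,\rho_k^\eps(s)\rangle|$ by $C_f(t-s)^{1/2}$ using the flux dissipation. Arzelà-Ascoli then yields a subsequence with $\rho_k^\eps\to\rho_k$ in $C([0,T],\mathcal{M}(\R^d))$. To handle the nonlinearity I need strong $L^1$ compactness: the bound on $|\nabla\sqrt{\rho_k^\eps}|$ combined with mass control gives $\rho_k^\eps\in L^1(0,T;W^{1,1}_{\mathrm{loc}})$, while the PDE together with the flux estimates gives $\p_t\rho_k^\eps$ bounded in a suitable negative Sobolev space, so Aubin-Lions provides $\rho_k^\eps\to\rho_k$ strongly in $L^1_{\mathrm{loc}}((0,T)\times\R^d)$; the moment bound then upgrades this to $L^1((0,T)\times\R^d)$.

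\textbf{Passage to the limit.} Strong $L^1$ convergence together with standard mollifier facts gives $V^\eps\ast\rho_k^\eps\to\rho_k$ in $L^1$ and, interpolating with the uniform $L^\infty_tL^m_x$ bound, in $L^p$ for every $p<m$; hence $(\sum_l a_l V^\eps\ast\rho_l^\eps)^{m-1}\to(\sum_l a_l\rho_l)^{m-1}$ in $L^{m/(m-1)}_{\mathrm{loc}}$. The dissipation bound on $\int\rho_k^\eps|\nabla V^\eps\ast P^\eps|^2$ combined via Hölder with the uniform $L^m$ bound on $V^\eps\ast\rho_k^\eps$ translates into a uniform $L^{m/(m-1)}(0,T;W^{1,m/(m-1)})$ bound on the pressure, so along a further subsequence its gradient converges weakly and the limit is identified with $\nabla(\sum_l a_l\rho_l)^{m-1}$; this gives property (3) of Definition \ref{definition}. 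The convective product $b_k\rho_k^\eps\nabla f\cdot\nabla V^\eps\ast P^\eps$ in the weak formulation of \eqref{eps cross} then passes to the limit by pairing strong $L^p$ convergence of $\rho_k^\eps$ with weak $L^{m/(m-1)}$ convergence of the pressure gradient, and taking $\eps\to 0$ produces \eqref{weaksolution}. Properties (1) and (2) are immediate from the uniform bounds and lower semicontinuity.

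\textbf{Main obstacle.} The delicate point is the nonlinear flux: the convolution $\nabla V^\eps\ast P^\eps$ only converges weakly, so I must pair it against a strongly convergent $\rho_k^\eps$. This is where the linear diffusion $\sigma\Delta\rho_k$ is essential, since it is the species-wise entropy-dissipation bound on $|\nabla\sqrt{\rho_k^\eps}|^2$ that unlocks Aubin-Lions for each individual density; without it, only a combined quantity like $(\sum_l a_l\rho_l^\eps)^{m/2}$ would enjoy Sobolev control, and identifying the limit of each species separately would be out of reach. A subtle related issue is that the dissipation identity does not split as a sum over $k$ of non-negative terms (because of the mixed pressure-entropy cross term), so the separate bounds on $\rho_k^\eps|\nabla V^\eps\ast P^\eps|^2$ and on $|\nabla\sqrt{\rho_k^\eps}|^2$ must be extracted by tracking the entropy of each species in addition to the full energy.
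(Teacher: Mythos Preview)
Your overall strategy (uniform a priori estimates $\to$ compactness $\to$ identification) matches the paper's, but there is a genuine gap in how you obtain the pressure regularity, and it propagates into the limit passage.

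You claim that ``the dissipation bound on $\int\rho_k^\eps|\nabla V^\eps\ast P^\eps|^2$ combined via H\"older with the uniform $L^m$ bound on $V^\eps\ast\rho_k^\eps$ translates into a uniform $L^{m/(m-1)}(0,T;W^{1,m/(m-1)})$ bound on the pressure''. This does not work: the dissipation is a $\rho_k^\eps$-\emph{weighted} $L^2$ bound on $\nabla g^\eps$ and carries no information where $\rho_k^\eps$ is small, so no unweighted $L^{m/(m-1)}$ estimate on $\nabla P^\eps$ follows from it. The paper instead convolves the equation by $V^\eps$, sums with weights $a_k$, and tests against $(\sum_l a_l u_l^\eps)^{m-1}$ (with $u_l^\eps=V^\eps\ast\rho_l^\eps$); this produces the additional dissipation term
\[
(m-1)\sigma\int_0^t\!\int_{\R^d}\Big(\sum_l a_l u_l^\eps\Big)^{m-2}\Big|\nabla\sum_l a_l u_l^\eps\Big|^2\,dx\,ds,
\]
i.e.\ $(\sum_l a_l u_l^\eps)^{m/2}\in L^2(0,T;H^1(\R^d))$ uniformly in $\eps$, from which the $L^{m/(m-1)}(W^{1,m/(m-1)})$ bound on $(\sum_l a_l u_l^\eps)^{m-1}$ follows by an elementary H\"older computation (Lemma~\ref{Sobolev}). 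Incidentally, if you expand your full energy dissipation carefully, the ``mixed pressure--entropy cross term'' you worry about is, after shifting one convolution, exactly this non-negative quantity---so it helps rather than hurts, and your separate entropy-tracking is not needed to control it. The same $H^1$ bound also gives $(u_k^\eps)^m$ bounded in $L^{d/(d-1)}$, hence equi-integrability of $(u_k^\eps)^m$, which the paper uses (via Vitali) to upgrade the strong convergence of $u_k^\eps$ from $L^1$ to $L^m$. You only claim strong $L^p$ convergence for $p<m$, and that is not enough to pair against the merely weak $L^{m/(m-1)}$ convergence of $\nabla P^\eps$ in the nonlinear flux term.

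On the species-wise compactness, your route does differ from the paper's and is worth recording. You propose to obtain $\int_0^T\!\int|\nabla\sqrt{\rho_k^\eps}|^2<\infty$ by testing with $\log\rho_k^\eps$ and then invoke Aubin--Lions. The paper avoids touching $\rho_k^\eps$ directly (a priori only a measure) and instead writes $u_k^\eps$ in Duhamel form with the heat kernel $\Gamma$: since $\Gamma,\nabla\Gamma\in L^1(0,T;X_\alpha)$ for the fractional space $X_\alpha$ and the flux $b_k\rho_k^\eps\nabla g^\eps$ is uniformly in $L^1$ by the weighted dissipation, $u_k^\eps$ is uniformly bounded in $L^1(0,T;X_\alpha)$; an interpolation inequality against $H^{-s}(B_R)$ then yields the Cauchy property in $L^1([0,T]\times B_R)$. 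Your entropy approach should also work but requires first justifying that each $\rho_k^\eps$ has a positive density (so that $\log\rho_k^\eps$ is an admissible test function), whereas the paper's mild-form argument only uses that $\rho_k^\eps$ is a probability measure and the weighted flux bound.
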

See Section \ref{proof exist rhok} for the proof of this theorem. 
\begin{remark}
We emphasise that Theorem \ref{exist rhok} characterise all possible adherence points of the convergent subsequences as $\eps$ goes to 0, as weak solutions of the cross-diffusion system \eqref{cross-diffusion1}.
\end{remark}
\begin{theorem}[Uniqueness]\label{uniqueness}
 If we assume further that all the species have the same mobility, i.e., $b_1=\ldots=b_n=b>0$, then there exists a unique weak solution of the cross-diffusion equation \eqref{cross-diffusion1} defined as in Definition \ref{definition}.
\end{theorem}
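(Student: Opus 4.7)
The plan is to exploit the equal-mobility hypothesis $b_k\equiv b$ to decouple the system into a single scalar viscous porous medium equation for the $a_k$-weighted sum of densities and then $n$ linear Fokker--Planck equations for the individual species. Setting $u:=\sum_{l=1}^n a_l\rho_l$ and taking the $a_k$-linear combination of the weak identity \eqref{weaksolution} over $k$, the common mobility $b$ factors out and the chain-rule identity $u\nabla u^{m-1}=\tfrac{m-1}{m}\nabla u^m$ collapses the drift term into the Laplacian of a power. The resulting equation is
\begin{equation*}
\partial_t u=\tfrac{b(m-1)}{m}\Delta u^m+\sigma\Delta u,\qquad u(0)=u_0:=\sum_{l=1}^n a_l\rho_{l,0}\in L^1\cap L^\infty(\R^d),
\end{equation*}
interpreted in the weak sense, which is a viscous porous medium equation. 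Once $u$ is shown to be unique, $P(\rho)=u^{m-1}$ becomes a fixed function of $(t,x)$ and each $\rho_k$ obeys a linear equation with the same prescribed drift and diffusion, so uniqueness carries over species by species.

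Let $\rho$ and $\tilde\rho$ be two weak solutions with common initial data and denote by $u,\tilde u$ the associated sums. In the first step I would show $u=\tilde u$ by the classical $L^1$-contraction argument for viscous porous medium equations: one tests $u-\tilde u$ against a regularised sign function, or equivalently dualises against the backward linear parabolic problem with smoothed coefficients, and uses monotonicity of $s\mapsto s^m$ together with the regularising effect of $\sigma\Delta$ to conclude. The integrability $u^{m-1}\in L^{m/(m-1)}(0,T;W^{1,m/(m-1)}(\R^d))$ from Definition~\ref{definition}(3) is exactly what is needed to make the cross terms integrable. In the second step, with $u=\tilde u$ established, I set $w_k:=\rho_k-\tilde\rho_k$ and observe that it solves
\begin{equation*}
\partial_t w_k-b\,\nabla\!\cdot\!\bigl(w_k\nabla u^{m-1}\bigr)=\sigma\Delta w_k,\qquad w_k(0)=0,
\end{equation*}
which is \emph{linear} in $w_k$. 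Testing against the solution of the backward adjoint equation (obtained by regularising the drift $\nabla u^{m-1}$ and passing to the limit with the help of the uniform energy estimate provided by $\sigma\Delta$) or, equivalently, running an $L^1$ contraction for the linear Fokker--Planck equation, yields $w_k\equiv 0$ for every $k$.

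The main obstacle is that the drift $b\nabla u^{m-1}$ is only known to lie in $L^{m/(m-1)}$, below the Ladyzhenskaya--Prodi--Serrin threshold for black-box linear parabolic uniqueness. To bypass this I would rely crucially on the extra viscosity $\sigma\Delta$ and on the boundedness of the densities inherited from the construction in Theorem~\ref{exist rhok}: the maximum principle for the approximating nonlocal equations \eqref{eps cross} propagates the $L^\infty(\R^d)$ bound of the initial data, giving $u,\rho_k\in L^\infty([0,T]\times\R^d)$, and combined with $u^{m-1}\in W^{1,m/(m-1)}$ this is enough to close both the porous medium $L^1$-contraction and the linear Fokker--Planck duality argument rigorously. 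A secondary technical point is to justify the chain rule $u\nabla u^{m-1}=\tfrac{m-1}{m}\nabla u^m$ as an identity of distributions under the available regularity, which follows from a standard mollification argument.
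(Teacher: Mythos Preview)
Your high-level architecture coincides with the paper's: under $b_k\equiv b$ the weighted sum $u=\sum_l a_l\rho_l$ decouples and satisfies a scalar viscous porous medium equation, and once $u$ is shown to be unique each $\rho_k$ solves a \emph{linear} Fokker--Planck equation with prescribed drift $b\nabla u^{m-1}$. The paper proceeds exactly this way.

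For the scalar step the paper uses a different technique from your $L^1$-contraction: an Oleinik-type duality. One mollifies, tests the equation for $u_\eps$ against $f=\int_t^T(\tfrac{m-1}{m}u^m+u)\ast V^\eps\,\d s$, first bootstraps any weak solution from $L^m$ to $L^{m+1}$ using only $u_0\in L^\infty$ and the regularity in Definition~\ref{definition}, and then the monotonicity of $s\mapsto \tfrac{m-1}{m}s^m+s$ forces $u=\bar u$. Your $L^1$-contraction route is a standard and legitimate alternative; either closes the PME uniqueness within the weak class.

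The genuine gap is in your linear Fokker--Planck step. You propose to ``rely crucially on the boundedness of the densities inherited from the construction in Theorem~\ref{exist rhok}'' via a maximum principle for the nonlocal approximants. But Theorem~\ref{uniqueness} asserts uniqueness among \emph{all} weak solutions in the sense of Definition~\ref{definition}, and that definition contains no $L^\infty$ bound on $\rho_k$. Importing a property enjoyed only by the particular constructed solution into a proof that two arbitrary weak solutions coincide is circular; at best it would yield weak--strong uniqueness, not what is claimed. The paper avoids $L^\infty$ entirely by exploiting the elementary structural inequality $a_k\rho_k\le \sum_l a_l\rho_l=u$ (nonnegativity of all species): combined with the PME energy identity $\int_0^T\!\int_{\R^d} u|\nabla u^{m-1}|^2\,\d x\,\d t<\infty$ this gives
\[
\int_0^T\!\int_{\R^d}\rho_k|\nabla u^{m-1}|^2\,\d x\,\d t\le a_k^{-1}\int_0^T\!\int_{\R^d} u|\nabla u^{m-1}|^2\,\d x\,\d t<\infty,
\]
so the drift lies in $L^2([0,T]\times\R^d,\rho_k\,\d x\,\d t)$ and in $L^1([0,T]\times\R^d,\rho_k\,\d x\,\d t)$. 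This is precisely the hypothesis of the Bogachev--Krylov--R\"ockner--Shaposhnikov uniqueness theorem for linear Fokker--Planck equations, which the paper then invokes. Replace your appeal to $L^\infty$ by this pointwise domination $\rho_k\le u/a_k$ and the argument goes through without any regularity beyond Definition~\ref{definition}.
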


We give the proof of this result in Section \ref{proof of uniqueness}. As the direct consequence of Proposition \ref{poc-cross} and Theorem \ref{exist rhok}, Theorem \ref{limit result} together with Corollary \ref{poc} is our second main result.  
\begin{theorem}[Particle Approximation]\label{limit result}
Under the assumptions of Theorem \ref{uniqueness}, for almost any $t\in[0,T]$ and any species $k$, the distribution of the particle \eqref{cross-X} converges to the weak solution of the cross-diffusion system \eqref{cross-diffusion1} when $N$ goes to infinity, and then  $\eps$ goes to 0 that
    $$
\lim_{\eps\to0}\lim_{N\to\infty}\rho_k^{(1),N,\eps}(t)=\rho_k(t).
    $$
\end{theorem}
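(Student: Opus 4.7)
The statement is a two-step limit, so the natural plan is to pass $N\to\infty$ first at fixed regularisation $\eps$, and then send $\eps\to 0$, using the two previously established building blocks: Proposition \ref{poc-cross} for the particle-to-nonlocal step, and Theorem \ref{exist rhok} combined with Theorem \ref{uniqueness} for the nonlocal-to-local step.

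For the inner limit I would argue as follows. Fix $\eps>0$. The Remark after Proposition \ref{poc-cross} gives, for every $t\in[0,T]$ and every $k$,
\[
W_2\bigl(\rho_k^{(1),N,\eps}(t),\rho_k^{\eps}(t)\bigr)\le \frac{C(\eps,T)^{1/2}}{N^{1/2}}\longrightarrow 0 \quad\text{as } N\to\infty.
\]
Since $W_2$ convergence implies convergence against bounded Lipschitz test functions, in particular we get $\rho_k^{(1),N,\eps}(t)\to \rho_k^{\eps}(t)$ in the metric $d$ on $\mathcal{M}(\R^d)$; equivalently, $\lim_{N\to\infty}\rho_k^{(1),N,\eps}(t)=\rho_k^{\eps}(t)$ in $\mathcal{M}(\R^d)$. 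This handles the inner limit for each fixed $\eps>0$ and each species $k$.

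For the outer limit, Theorem \ref{exist rhok} furnishes a subsequence $\eps_j\to 0$ along which $\rho^{\eps_j}\to\rho$ in $C([0,T],\mathcal{M}(\R^d))$, with $\rho$ a weak solution of \eqref{cross-diffusion1} in the sense of Definition \ref{definition}. Here is where Theorem \ref{uniqueness} is essential: under the assumption $b_1=\cdots=b_n=b$, the weak solution $\rho$ is unique. A standard subsequence-of-subsequence argument then upgrades the convergence of the whole family: given any sequence $\eps_j\to 0$, Theorem \ref{exist rhok} produces a further subsequence converging to some weak solution, which by Theorem \ref{uniqueness} must coincide with $\rho$; hence $\rho^{\eps}\to\rho$ in $C([0,T],\mathcal{M}(\R^d))$ as $\eps\to 0$ without extracting. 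In particular $\rho_k^{\eps}(t)\to\rho_k(t)$ in $\mathcal{M}(\R^d)$ pointwise in $t$.

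Combining the two limits gives the claim: for a.e.\ $t\in[0,T]$ and every $k$,
\[
\lim_{\eps\to 0}\lim_{N\to\infty}\rho_k^{(1),N,\eps}(t)=\lim_{\eps\to 0}\rho_k^{\eps}(t)=\rho_k(t)
\]
in $\mathcal{M}(\R^d)$. The main conceptual subtlety, and the only non-routine point, is justifying that the outer limit does not require passing to a subsequence; this is exactly where the uniqueness statement of Theorem \ref{uniqueness} (and hence the assumption of equal mobilities) enters. Everything else is a direct concatenation of results already proved in the paper.
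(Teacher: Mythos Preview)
Your argument is correct and is exactly the approach the paper has in mind: the paper does not spell out a separate proof of this theorem but simply declares it a direct consequence of Proposition \ref{poc-cross} and Theorem \ref{exist rhok}, and you have faithfully unpacked that implication, including the key observation that Theorem \ref{uniqueness} is what upgrades the subsequential convergence of Theorem \ref{exist rhok} to convergence of the full family $(\rho^{\eps})_{\eps>0}$.
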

In fact, we can take $\eps$ depending on $N$ as in Remark \ref{Neps} and combine the two-step limit into one as 
$
\lim_{N\to\infty}\rho_k^{(1),N,\eps(N)}(t)=\rho_k(t).
$

Let $M$ be a fixed natural number and $\rho_k^{(M),N,\eps}$ is the joint law of $X^{i,\eps}_k$, $i=1,2,\ldots,M$ on $\R^{dM}$, i.e. the $M$-marginal of the joint law of $N$ particles. And $\rho_k^{\otimes M}$ is the independently tensorised solution of cross-diffusion system on $\R^{dM}$, we obtain propagation of chaos.
 \begin{corollary}\label{poc}
  Under the assumptions of Theorem \ref{limit result}, it holds
  $$
    \lim_{\eps\to0}\lim_{N\to\infty}\rho_k^{(M),N,\eps}(t)=\rho_k^{\otimes M}(t).
    $$
 \end{corollary}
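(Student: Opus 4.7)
The plan is to derive Corollary \ref{poc} as a direct bookkeeping consequence of Proposition \ref{poc-cross} and Theorem \ref{limit result}, the key point being that the nonlinear processes $Y_k^{i,\eps}$ indexed by $i$ are mutually independent copies of one another, so their joint law on $\R^{dM}$ is exactly the tensor product $(\rho_k^\eps(t))^{\otimes M}$.

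First, I would verify this independence claim. For fixed species $k$ and fixed $\eps>0$, once the deterministic marginal $\rho_k^\eps$ is known, the McKean--Vlasov equation \eqref{cross-Y} decouples into genuine (non mean-field) SDEs whose drift depends only on the single trajectory $Y_k^{i,\eps}$. Since the initial data $\xi_k^i$ are i.i.d.\ with common law $\rho_{k,0}$ and the driving Brownian motions $B_k^i$ are mutually independent across $i$, pathwise uniqueness implies that $Y_k^{1,\eps},\ldots,Y_k^{M,\eps}$ are i.i.d.\ with common marginal $\rho_k^\eps(t)$. Hence
\begin{equation*}
\law\bigl(Y_k^{1,\eps}(t),\ldots,Y_k^{M,\eps}(t)\bigr)=(\rho_k^\eps(t))^{\otimes M}.
\end{equation*}

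Second, using the pathwise coupling $(X_k^{i,\eps},Y_k^{i,\eps})_{i=1,\ldots,M}$ built in \eqref{cross-X}--\eqref{cross-Y} as an admissible transport plan on $\R^{dM}$ equipped with the product Euclidean distance, I would estimate
\begin{align*}
W_2^2\bigl(\rho_k^{(M),N,\eps}(t),\,(\rho_k^\eps(t))^{\otimes M}\bigr)
\;\leq\;\sum_{i=1}^M \E\!\left[\left|X_k^{i,\eps}(t)-Y_k^{i,\eps}(t)\right|^2\right]
\;\leq\;\frac{M\,C(\eps,t)}{N},
\end{align*}
where the last inequality uses exchangeability in $i$ together with Proposition \ref{poc-cross}. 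Letting $N\to\infty$ with $\eps$ fixed therefore yields $\rho_k^{(M),N,\eps}(t)\to(\rho_k^\eps(t))^{\otimes M}$ in $W_2$, hence weakly in $\mathcal{M}(\R^{dM})$.

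Finally, to let $\eps\to 0$, Theorem \ref{limit result} gives $\rho_k^\eps(t)\to\rho_k(t)$ weakly in $\mathcal{M}(\R^d)$ (the full family converges by the uniqueness in Theorem \ref{uniqueness}), and weak convergence is stable under finite tensor products, as one sees by testing against products of functions in $C_b(\R^d)$ and applying Fubini. Thus $(\rho_k^\eps(t))^{\otimes M}\to\rho_k^{\otimes M}(t)$ weakly on $\R^{dM}$, which completes the proof. There is no genuine obstacle here; the only subtlety is isolating the i.i.d.\ structure of the $Y_k^{i,\eps}$'s that identifies the $M$-marginal limit with a tensor product, after which the corollary is purely mechanical.
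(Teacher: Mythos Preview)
The paper does not give an explicit proof of Corollary \ref{poc}; it is stated immediately after Theorem \ref{limit result} as a direct consequence of Proposition \ref{poc-cross} together with Theorems \ref{exist rhok} and \ref{uniqueness}. Your proposal correctly supplies the missing details along exactly the route the paper has in mind: the i.i.d.\ structure of the nonlinear processes $(Y_k^{i,\eps})_i$ identifies their joint law with $(\rho_k^\eps(t))^{\otimes M}$, the pathwise coupling from Proposition \ref{poc-cross} gives the $W_2$ bound $M\,C(\eps,t)/N$ on $\R^{dM}$, and stability of weak convergence under finite tensor products handles the $\eps\to 0$ step once uniqueness (Theorem \ref{uniqueness}) upgrades the subsequential convergence of Theorem \ref{exist rhok} to convergence of the full family.
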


\section{Proof of Proposition \ref{poc-cross}}\label{proof poc-cross}
In this section, we investigate the large $N$ limit of the particle system \eqref{cross-X}. In particularly, we will prove the convergence $\lim_{N\to\infty}\rho^{(1),N,\eps}_k=\rho^\eps_k$.

Since $X_k^{i,\eps}(0)=Y_k^{i,\eps}(0)=\xi_k^i$, H\"{o}lder's inequality implies
$$
\begin{aligned}
\big|X_{k}^{i,\eps}(t)-Y_{k}^{i,\eps}(t)\big|^2
= &\, \,\Big|\int_0^t\int_{\R^d}b_k\nabla V^\eps(z)\Big[\Big(\sum_{l=1}^{n}\frac{a_{l}}{N}\sum_{j=1}^NV^{\eps} \big(X_{k}^{i,\eps}(s)-X_l^{j,\eps}(s)-z\big)\Big)^{m-1}\\ &\, 
\qquad\qquad\qquad\qquad\qquad-\Big(\sum_{l=1}^{n}a_{l}V^{\eps} \ast  \rho_l^{\eps}\big(s,Y_{k}^{i,\eps}(s)-z\big)\Big)^{m-1}\Big]\d z\d s\Big|^2\\
\leq  &\,  t \int_0^t\Big(\int_{\R^d}b_k|\nabla V^\eps(z)|
\Big[\Big(\sum_{l=1}^{n}\frac{a_{l}}{N}\sum_{j=1}^NV^{\eps} \big(X_{k}^{i,\eps}(s)-X_l^{j,\eps}(s)-z\big)\Big)^{m-1}\\ &\, 
\qquad\qquad\qquad\qquad\qquad-\Big(\sum_{l=1}^{n}a_{l}V^{\eps} \ast  \rho_l^{\eps}\big(s,Y_{k}^{i,\eps}(s)-z\big)\Big)^{m-1}\Big]\d z\Big)^2\d s.
\end{aligned}
$$
The following equality holds  by the scaling of $V^\eps(\cdot)=\eps^{-d}V(\cdot/\eps)$,
$$
\int_{\R^d}|\nabla V^\eps(z) |\d z=\frac{1}{\eps}\int_{\R^d}|\nabla V(z) |\d z<\frac{C_V}{\eps},
$$
where $C_V$ is independent with $\eps$, then
\begin{equation}\label{XYdistance}
\begin{aligned}
&\,\big|X_{k}^{i,\eps}(t)-Y_{k}^{i,\eps}(t)\big|^2\\
\leq  &\,  t b_k^2\int_0^t\Big|\int_{\R^d}\nabla V^\eps(z)\d z\times
\sup_{y\in \R^d}\Big[\Big(\sum_{l=1}^{n}\frac{a_{l}}{N}\sum_{j=1}^NV^{\eps} \big(X_{k}^{i,\eps}(s)-X_l^{j,\eps}(s)-y\big)\Big)^{m-1}\\ &\, 
\qquad\qquad\qquad\qquad\qquad\qquad\qquad\qquad-\Big(\sum_{l=1}^{n}a_{l}V^{\eps} \ast  \rho_l^{\eps}\big(s,Y_{k}^{i,\eps}(s)-y\big)\Big)^{m-1}\Big]\Big|^2\d s\\
\leq  &\,  \frac{C_V^2b_k^2t}{\eps^2} \int_0^t\sup_{y\in \R^d}\Big|\Big(\sum_{l=1}^{n}\frac{a_{l}}{N}\sum_{j=1}^NV^{\eps} \big(X_{k}^{i,\eps}(s)-X_l^{j,\eps}(s)-y\big)\Big)^{m-1}\\ &\, 
\qquad\qquad\qquad\qquad\qquad\qquad\qquad\qquad-\Big(\sum_{l=1}^{n}a_{l}V^{\eps} \ast  \rho_l^{\eps}\big(s,Y_{k}^{i,\eps}(s)-y\big)\Big)^{m-1}\Big|^2\d s
\end{aligned}
\end{equation}
When $m\geq 2$, it holds that
$$
\begin{aligned}
&\,\Big|\Big(\sum_{l=1}^{n}\frac{a_{l}}{N}\sum_{j=1}^NV^{\eps} \big(X_{k}^{i,\eps}(s)-X_l^{j,\eps}(s)-y\big)\Big)^{m-1}-\Big(\sum_{l=1}^{n}a_{l}V^{\eps} \ast  \rho_l^{\eps}\big(s,Y_{k}^{i,\eps}(s)-y\big)\Big)^{m-1}\Big|^2   \\
\leq &\, C\|V^\eps\|^{2m-4}_{L^\infty}\Big|\sum_{l=1}^{n}\frac{a_l}{N}\sum_{j=1}^NV^{\eps} \big(X_{k}^{i,\eps}(s)-X_l^{j,\eps}(s)-y\big)-\sum_{l=1}^{n}a_{l}V^{\eps} \ast  \rho_l^{\eps}\big(s,Y_{k}^{i,\eps}(s)-y\big)\Big|^2.
\end{aligned}
$$
And the quadratic term can be estimated as follows 
$$
\begin{aligned}
 &\, \Big|\sum_{l=1}^{n}\frac{a_l}{N}\sum_{j=1}^NV^{\eps} \big(X_{k}^{i,\eps}(s)-X_l^{j,\eps}(s)-y\big)-\sum_{l=1}^{n}a_{l}V^{\eps} \ast  \rho_l^{\eps}\big(s,Y_{k}^{i,\eps}(s)-y\big)\Big|^2\\
\leq  &\,  n\sum_{l=1}^{n}a_{l}^2\Big|\frac{1}{N}\sum_{j=1}^NV^{\eps} \big(X_{k}^{i,\eps}(s)-X_l^{j,\eps}(s)-y\big)-V^{\eps} \ast  \rho_l^{\eps}\big(s,Y_{k}^{i,\eps}(s)-y\big)\Big|^2\\
\leq  &\,  3n\sum_{l=1}^{n}a^2_{l}\Big|\frac{1}{N}\sum_{j=1}^NV^{\eps} \big(X_{k}^{i,\eps}(s)-X_l^{j,\eps}(s)-y\big)-\frac{1}{N}\sum_{j=1}^NV^{\eps} \big(X_{k}^{i,\eps}(s)-Y_l^{j,\eps}(s)-y\big)\Big|^2 \\
  &\,  +3n\sum_{l=1}^{n}a_{l}^2\Big|\frac{1}{N}\sum_{j=1}^NV^{\eps} \big(X_{k}^{i,\eps}(s)-Y_l^{j,\eps}(s)-y\big)-\frac{1}{N}\sum_{j=1}^NV^{\eps} \big(Y_{k}^{i,\eps}(s)-Y_l^{j,\eps}(s)-y\big)\Big|^2 \\
   &\,  +3n\sum_{l=1}^{n}a_{l}^2\Big|\frac{1}{N}\sum_{j=1}^NV^{\eps} \big(Y_{k}^{i,\eps}(s)-Y_l^{j,\eps}(s)-y\big)-V^{\eps} \ast  \rho_l^{\eps}\big(s,Y_{k}^{i,\eps}(s)-y\big)\Big|^2 \\
  =: &\, J^{i,k}_1+J^{i,k}_2+J^{i,k}_3.
\end{aligned}
$$
The first term and the second term can be estimated thanks to the Lipschitz continuity of $V^\eps$ fixed $\eps$ as 
$$
J^{i,k}_1\leq 3n\|\nabla V^\eps\|^2_{L^\infty}\sum_{l=1}^{n} \frac{a_{l}^2}{N}\sum_{j=1}^N\big|X_l^{j,\eps}(s)-Y_l^{j,\eps}(s)\big|^2, $$
and
$$J^{i,k}_2\leq 3n\|\nabla V^\eps\|^2_{L^\infty} \big|X_k^{i,\eps}(s)-Y_k^{i,\eps}(s)\big|^2\sum_{l=1}^{n}a_{l}^2.
$$
For any $t\in[0,T]$, we take expectation value of \eqref{XYdistance} that
we then have
$$
\begin{aligned}
 &\, \E\Big[\sup_{0\leq s\leq t} |X_{k}^{i,\eps}(s)-Y_{k}^{i,\eps}(s)|^2\Big]\\
\leq  &\,   C_0(\eps)t
\int_0^t\sup_{y\in \R^d}\E\Big[\Big|\sum_{l=1}^{n}\frac{a_l}{N}\sum_{j=1}^NV^{\eps} \big(X_{k}^{i,\eps}(s)-X_l^{j,\eps}(s)-y\big)-\sum_{l=1}^{n}a_{l}V^{\eps} \ast  \rho_l^{\eps}\big(s,Y_{k}^{i,\eps}(s)-y\big)\Big|^2\Big]\d s\\
\leq  &\,   C_0(\eps)t
\int_0^t\sup_{y\in \R^d}\big(\E[J^{i,k}_1]+\E[J^{i,k}_2]+\E[J^{i,k}_3]\big)\d s,
\end{aligned}
$$
where 
$$
\begin{aligned}
\E [J^{i,k}_3]= &\, 3n\sum_{l=1}^{n}a_{l}^2\E\Big[\Big|\frac{1}{N}\sum_{j=1}^NV^{\eps} \big(Y_{k}^{i,\eps}(s)-Y_l^{j,\eps}(s)-y\big)-V^{\eps} \ast  \rho_l^{\eps}\big(s,Y_{k}^{i,\eps}(s)-y\big)\Big|^2\Big]\\
\leq  &\,  3n\sum_{l=1}^na_{l}^2\frac{1}{N^2}\sum_{j,j'}\E\Big[\Big(V^{\eps} \big(Y_{k}^{i,\eps}(s)-Y_l^{j,\eps}(s)-y\big)-V^{\eps} \ast  \rho_l^{\eps}\big(s,Y_{k}^{i,\eps}(s)-y\big)\Big)\\ &\, \qquad\qquad\qquad\times\Big(V^{\eps} \big(Y_{k}^{i,\eps}(s)-Y_l^{j',\eps}(s)-y\big)-V^{\eps} \ast  \rho_l^{\eps}\big(s,Y_{k}^{i,\eps}(s)-y\big)\Big)\Big],
\end{aligned}
$$
and the constant 
$$
C_0(\eps)\sim \frac{\|V^\eps\|_{L^\infty}^{2m-4}}{\eps^2}\sim \frac{1}{\eps^{2md-4d+2}}.
$$
Recalling the definition of nonlinear process  \eqref{cross-Y}, we can see that the randomness of $Y_{l}^{j,\eps}$ for different index $j$ comes from i.i.d Brownian motions $B^j_l$.  When $i\neq j\neq j'$, the sum vanishes because 
$$
\begin{aligned}
 &\, \E\Big[\Big(V^{\eps} \big(Y_{k}^{i,\eps}(s)-Y_l^{j,\eps}(s)-y\big)-V^{\eps} \ast  \rho_l^{\eps}\big(s,Y_{k}^{i,\eps}(s)-y\big)\Big)\\ &\, \qquad\qquad\times\Big(V^{\eps} \big(Y_{k}^{i,\eps}(s)-Y_l^{j',\eps}(s)-y\big)-V^{\eps} \ast  \rho_l^{\eps}\big(s,Y_{k}^{i,\eps}(s)-y\big)\Big)\Big]\\
=  &\,  \E\bigg[\E\Big[\Big(V^{\eps} \big(z-Y_l^{j,\eps}(s)-y\big)-V^{\eps} \ast  \rho_l^{\eps}\big(s,z-y\big)\Big)\\ &\, \qquad\qquad\times\Big(V^{\eps} \big(z-Y_l^{j',\eps}(s)-y\big)-V^{\eps} \ast  \rho_l^{\eps}\big(s,z-y\big)\Big)\big|z=Y_{k}^{i,\eps}(s)\Big]\bigg]\\
=  &\,  \E\bigg[\E\Big[V^{\eps} \big(z-Y_l^{j,\eps}(s)-y\big)-V^{\eps} \ast  \rho_l^{\eps}\big(s,z-y\big)\big|z=Y_{k}^{i,\eps}\Big]\\ &\, \qquad\qquad\times\E\Big[V^{\eps} \big(z-Y_l^{j',\eps}(s)-y\big)-V^{\eps} \ast  \rho_l^{\eps}\big(s,z-y\big)\big|z=Y_{k}^{i,\eps}\Big]\bigg]
=  0,
\end{aligned}
$$ 
where $Y_l^{j,\eps}(s)$ and $Y_l^{j',\eps}(s)$ have the same  distribution $\rho_l^{\eps}(s)$. 
Fixed the index $i$, number of elements in the set $$S=\{j,j'|\text{At least two of indexes } i,j,j' \text{are equal}\}
$$
is $3N-2$.  
Thus we can bound $\E[J^{i,k}_3]$ as
$$
\begin{aligned}
\E [J^{i,k}_3] 
=  &\,  3n\sum_{l=1}^na_{l}^2\frac{1}{N^2}\sum_{i\neq j\neq j'}\E\Big[\Big(V^{\eps} \big(Y_{k}^{i,\eps}(s)-Y_l^{j,\eps}(s)-y\big)-V^{\eps} \ast  \rho_l^{\eps}\big(s,Y_{k}^{i,\eps}(s)-y\big)\Big)\\ &\, \qquad\qquad\qquad\times\Big(V^{\eps} \big(Y_{k}^{i,\eps}(s)-Y_l^{j',\eps}(s)-y\big)-V^{\eps} \ast  \rho_l^{\eps}\big(s,Y_{k}^{i,\eps}(s)-y\big)\Big)\Big]\\
&\,  +3n\sum_{l=1}^na_{l}^2\frac{1}{N^2}\sum_{S}\E\Big[\Big(V^{\eps} \big(Y_{k}^{i,\eps}(s)-Y_l^{j,\eps}(s)-y\big)-V^{\eps} \ast  \rho_l^{\eps}\big(s,Y_{k}^{i,\eps}(s)-y\big)\Big)\\ &\, \qquad\qquad\qquad\times\Big(V^{\eps} \big(Y_{k}^{i,\eps}(s)-Y_l^{j',\eps}(s)-y\big)-V^{\eps} \ast  \rho_l^{\eps}\big(s,Y_{k}^{i,\eps}(s)-y\big)\Big)\Big]\\
\leq &\, \frac{12(3N-2)n^2\|V^\eps\|^2_{L^\infty}\sum_{l=1}^{n}a_{l}^2}{N^2}.
\end{aligned}
$$
Now we possess all ingredients to estimate $\E\Big[\sup_{0\leq s\leq t} |X_{k}^{i,\eps}(s)-Y_{k}^{i,\eps}(s)|^2\Big]$ as
$$
\begin{aligned}
 &\, \E\Big[\sup_{0\leq s\leq t} |X_{k}^{i,\eps}(s)-Y_{k}^{i,\eps}(s)|^2\Big]\\
\leq  &\,   C_0(\eps)t\int_0^t\sup_{y\in \R^d}\big(\E[J^{i,k}_1]+\E[J^{i,k}_2]+\E[J^{i,k}_3]\big)\d s\\
\leq  &\,  C_0(\eps)t\int_0^t\Big(3n\|\nabla V^\eps\|^2_{L^\infty}\sum_{l=1}^{n}a_{l}^2 \E\big|X_l^{i,\eps}(s)-Y_l^{i,\eps}(s)\big|^2\\ &\,+3n\|\nabla V^\eps\|^2_{L^\infty} \E\big|X_k^{i,\eps}(s)-Y_k^{i,\eps}(s)\big|^2\sum_{l=1}^{n}a_{l}^2 +\frac{12(3N-2)n^2\|V^\eps\|^2_{L^\infty}\sum_{l=1}^{n}a_{l}^2}{N^2}\Big)\d s.
\end{aligned}
$$
We sum up species index $k$ from $1$ to $n$, then we can see that
$$
\begin{aligned}
 &\, \sum_{k=1}^n \E\Big[\sup_{0\leq s\leq t} |X_{k}^{i,\eps}(s)-Y_{k}^{i,\eps}(s)|^2\Big]
\leq \frac{C_1(\eps)t^2}{N} +C_2(\eps,T)\int_0^t\sum_{k=1}^n \E\big|X_k^{i,\eps}(s)-Y_k^{i,\eps}(s)\big|^2\d s, 
\end{aligned}
$$
where
$$
C_1(\eps)\sim C_0(\eps)\|V^\eps\|^2_{L^\infty}\sim\frac{1}{\eps^{2md-2d+2}},\quad C_2(\eps,T)\sim C_0(\eps)\|\nabla V^\eps\|^2_{L^\infty}\sim\frac{1}{\eps^{2md-2d+4}}.
$$
Gronwall's inequality implies the estimate as 
$$
\begin{aligned}
\sum_{k=1}^n \E\Big[\sup_{0\leq s\leq t} |X_{k}^{i,\eps}(s)-Y_{k}^{i,\eps}(s)|^2\Big]&\,\leq \frac{2C_1(\eps)}{N}\int_0^ts e^{-C_2(\eps,T)s}\d s\leq \frac{C(\eps,t)}{N},
\end{aligned}
$$
where 
\begin{equation}\label{eps N relation}
C(\eps,t)=\frac{2C_1(\eps)}{\big(C_2(\eps,T)\big)^2}e^{C_2(\eps,T)t}\sim\eps^{6+2d(m-1)}\exp(t/\eps^{4+2d(m-1)}).
\end{equation} 

\section{Proof of Theorem \ref{exist rhok}}\label{proof exist rhok}
\iffalse
We consider $n$-species cross-diffusion system which $k$-th $(k=1,2,\ldots,n)$ distribution satisfies the following equation in weak sense: 
\begin{equation}\label{cross-diffusion}
  \frac{\p \rho_{k}}{\p t}=\nabla\cdot \Big(a_k\rho_{k}\nabla\big(\sum_{l=1}^{n}a_{l} \rho_{l}\big)^{m-1}\Big)+\sigma \Delta \rho_{k}.  
\end{equation}
with initial data as $\rho_{k}^0$. 

Recall the distribution $\rho_k^{\eps}$ of $k$-th nonlinear process  \eqref{cross-Y}  satisfies the following equation in the distributional sense,
\begin{equation}\label{eps cross-diffusion}
 \frac{\p \rho_k^{\eps}}{\p t}=\nabla\cdot \Big(a_k\rho_k^{\eps}\nabla V^{\eps} \ast \big(\sum_{l=1}^{n}a_{l}V^{\eps} \ast  \rho_l^{\eps}\big)^{m-1}\Big)+\sigma \Delta \rho_k^{\eps}   .
\end{equation}
\fi
In this section, we will prove the nonlocal to local convergence, i.e., for any species $k$, the measure-valued solution $\rho^\eps_k$ of equations \eqref{eps cross} 
converges to a weak solution $\rho_k$ of the cross-diffusion system \eqref{cross-diffusion1}
when $\eps$ goes to $0$ (up to a subsequence).

Let us define the nonnegative functions $g^\eps:[0,T]\times \R^d\to\R$ as
$$
g^\eps(t,x)=V^{\eps} \ast \big(\sum_{l=1}^{n}a_{l}V^{\eps} \ast  \rho_l^{\eps}(t)\big)^{m-1}(x),
$$
further define the regularised solution of nonlocal equation \eqref{eps cross} as $$u_{k}^{\eps}=V^\eps\ast \rho_k^{\eps},$$ which is also a nonnegative probability measure, then 
$
g^\eps=V^{\eps} \ast \big(\sum_{l=1}^{n}a_{l}u_{l}^{\eps}\big)^{m-1}.
$
Then we convolve both sides of \eqref{eps cross} with $V^\eps$ to obtain the equality
$$
\p_t u_{k}^{\eps}=\nabla\cdot\big(b_k\rho_k^{\eps}\nabla g^\eps\big)\ast  V^\eps+\sigma  \Delta u_{k}^{\eps}=\big(b_k\rho_k^{\eps}\nabla g^\eps\big)\ast \nabla V^\eps+\sigma \Delta u_{k}^{\eps},
$$
which leads to
$$
\p_t \big(a_ku_{k}^{\eps}\big)=\Big(a_kb_k\rho_k^{\eps}\nabla g^\eps\Big)\ast \nabla V^\eps+a_k\sigma\Delta u_{k}^{\eps}.
$$
Summing up species index $k$ from $1$ to $n$ and testing against it by $( \sum_ka_ku_{k}^{\eps})^{m-1}$, we get 
$$
\begin{aligned}
   &\, \int_{\R^d} (\sum_ka_ku_{k}^{\eps}(t))^{m}\d x-\int_{\R^d}(\sum_ka_ku_{k}^{\eps}(0))^{m}\d x \\
  = &\, \int_0^t\int_{\R^d}(\sum_ka_ku_{k}^{\eps})^{m-1}\Big(\sum_ka_kb_k\rho_k^{\eps}\nabla g^\eps\Big)\ast \nabla V^\eps\d x\d s \\&\,\qquad\qquad\qquad\qquad+\sigma\int_0^t\int_{\R^d}(\sum_ka_ku_{k}^{\eps})^{m-1}\Delta(\sum_k a_ku_{k}^{\eps})\d x\d s \\
  = &\, -\int_0^t\int_{\R^d}\nabla V^\eps\ast(\sum_ka_ku_{k}^{\eps})^{m-1}\cdot\Big(\sum_ka_kb_k\rho_k^{\eps}\nabla g^\eps\Big)\d x\d s \\ &\, \qquad\qquad\qquad\qquad-(m-1)\sigma\int_0^t\int_{\R^d}(\sum_ka_ku_{k}^{\eps})^{m-2}\big|\nabla(\sum_k a_ku_{k}^{\eps})\big|^2\d x\d s \\
  = &\, -\sum_k\int_0^t\int_{\R^d}a_kb_k\big|\nabla g^\eps\big|^2\rho_k^{\eps}(\d x)\d s -(m-1)\sigma\int_0^t\int_{\R^d}(\sum_ka_ku_{k}^{\eps})^{m-2}\big|\nabla(\sum_k a_ku_{k}^{\eps})\big|^2\d x\d s ,
\end{aligned}
$$
where we applied the following fact in the second equality, for some integrable $f$ and $h$,
$$
\int_{\R^d} f(x)\big(h \ast  \nabla V^{\eps}\big)(x) \d x=-\int_{\R^d}\big(\nabla V^{\eps} \ast  f\big)(x) h(x) \d x.
$$
By the assumption $\rho_{k,0}\in L^1\cap L^\infty\subset L^m$ and $u_{k}^{\eps}(0)=\rho_{k,0}\ast V^\eps$,  which implies 
$$
\|u_{k}^{\eps}(0)\|_{L^m(\R^d)} \leq\|\rho_{k,0}\|_{L^m(\R^d)}<\infty.
$$
Then we get the uniform in $\eps$ estimate as follows.

\begin{lemma}\label{apriori}
For each species $k$ and $t\geq 0$, the following estimate holds
$$
\begin{aligned}
 &\, \big\|\sum_ka_ku_{k}^{\eps}(t)\big\|_{L^m}^m+\sum_k\int_0^t\int_{\R^d}a_kb_k\big|\nabla g^\eps\big|^2\rho_k^{\eps}(s,\d x)\\ &\, \qquad\qquad\qquad+(m-1)\sigma\int_0^t\int_{\R^d}(\sum_ka_ku_{k}^{\eps})^{m-2}\big|\nabla(\sum_k a_ku_{k}^{\eps})\big|^2\leq\big\|\sum_ka_k\rho_{k,0}\big\|_{L^m}^m<\infty. 
\end{aligned}
$$
\end{lemma}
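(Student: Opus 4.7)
The plan is to carry out rigorously the formal computation already sketched before the statement, i.e.\ to perform an energy-type estimate on the convolved variable $u_k^\eps = V^\eps \ast \rho_k^\eps$. Since $V^\eps$ is smooth with compact support and $\rho_k^\eps$ is a finite measure, each $u_k^\eps$ is in fact a smooth, nonnegative function, so all quantities appearing on the left-hand side are well defined. The first step is to mollify the nonlocal PDE \eqref{eps cross} by $V^\eps$ to obtain
\begin{equation*}
\p_t u_k^\eps = \bigl(b_k \rho_k^\eps \nabla g^\eps\bigr) \ast \nabla V^\eps + \sigma \Delta u_k^\eps,
\end{equation*}
which is a genuine pointwise identity in $(t,x)$.

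Next I would multiply by $a_k (\sum_l a_l u_l^\eps)^{m-1}$, sum over $k$, and integrate over $\R^d$. The drift term is rewritten using the symmetry identity
\begin{equation*}
\int_{\R^d} f(x)\,(h\ast\nabla V^\eps)(x)\,\d x = -\int_{\R^d} (\nabla V^\eps \ast f)(x)\,h(x)\,\d x,
\end{equation*}
valid whenever $f$, $h$ are integrable and $V^\eps$ is even, which converts the drift contribution into $-\sum_k \int a_k b_k |\nabla g^\eps|^2 \rho_k^\eps\,\d x$ because $\nabla V^\eps \ast (\sum_l a_l u_l^\eps)^{m-1} = \nabla g^\eps$. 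The Laplacian term integrates by parts to yield $-(m-1)\sigma\int (\sum_k a_k u_k^\eps)^{m-2}|\nabla(\sum_k a_k u_k^\eps)|^2\,\d x$, which is manifestly nonnegative.

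For the time integration I would work first on a time interval in which all quantities are smooth and integrable (the mollification ensures this for each fixed $\eps$), so that the fundamental theorem of calculus gives exactly the displayed identity before the lemma, with no inequality yet. Both dissipation terms being nonnegative, the inequality in Lemma~\ref{apriori} follows by dropping them on appropriate sides. The initial-data bound follows from $u_k^\eps(0) = V^\eps \ast \rho_{k,0}$ together with Young's convolution inequality, giving $\|u_k^\eps(0)\|_{L^m} \leq \|\rho_{k,0}\|_{L^m}$, so $\|\sum_k a_k u_k^\eps(0)\|_{L^m} \leq \|\sum_k a_k \rho_{k,0}\|_{L^m}$, which is finite thanks to $\rho_{k,0}\in L^1 \cap L^\infty \subset L^m$.

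The only mildly delicate point is to ensure that the test function $(\sum_l a_l u_l^\eps)^{m-1}$ and its gradient decay sufficiently at infinity to justify both the integration by parts on $\Delta u_k^\eps$ and the exchange in the convolution identity. Since $u_k^\eps$ inherits integrability and pointwise decay at spatial infinity from $\rho_k^\eps$ (which is the law of the nonlinear process $Y_k^{i,\eps}$ with finite second moment if $\rho_{k,0}$ has one, or at least tightness uniformly on $[0,T]$), and $V^\eps$ is compactly supported, these endpoint contributions vanish. A cleaner way that avoids any decay discussion is to multiply instead by $\chi_R(x)(\sum_l a_l u_l^\eps)^{m-1}$ with a smooth cutoff $\chi_R$, derive the identity with an error term, and send $R\to\infty$ using that $\sum_k a_k u_k^\eps \in L^\infty(0,T;L^1\cap L^\infty)$ uniformly in $R$ but not in $\eps$. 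This cutoff step is the only place where care is required; the rest is a direct computation.
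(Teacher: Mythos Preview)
Your proposal is correct and follows essentially the same approach as the paper: the paper's argument (given in the paragraphs immediately preceding the lemma) is exactly the mollify--multiply--integrate-by-parts computation you describe, using the same convolution symmetry identity for the drift term and the same Young's inequality bound on the initial data. Your additional discussion of the cutoff argument to justify the integrations by parts is a sensible bit of rigor that the paper leaves implicit; the only cosmetic slip is that the bound $\|\sum_k a_k u_k^\eps(0)\|_{L^m}\le\|\sum_k a_k \rho_{k,0}\|_{L^m}$ should be obtained by applying Young's inequality directly to $V^\eps\ast\bigl(\sum_k a_k\rho_{k,0}\bigr)$, not deduced from the individual bounds $\|u_k^\eps(0)\|_{L^m}\le\|\rho_{k,0}\|_{L^m}$.
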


\begin{remark}\label{remark}
 From above lemma, we can deduce that for each $k$ the nonnegative sequence $(u_{k}^{\eps})_{\eps>0}$  is bounded in $L^{\infty}([0,T], L^m(\R^d))$.   And the equality
 $$
 \int_0^t\int_{\R^d} ( \sum_ka_ku_{k}^{\eps})^{m-2}\big|\nabla(\sum_ka_ku_{k}^{\eps})\big|^2 \d x\d s=\frac{4}{m^2}\int_0^t\int_{\R^d}\big|\nabla(\sum_ka_ku_{k}^{\eps})^{\frac{m}{2}}\big|^2 \d x\d s,
 $$
implies the sequence $\big((\sum_ka_ku_{k}^{\eps})^{m/2}\big)_{\eps>0}$ 
is bounded in $L^{2}([0,T], H^1(\R^d))$; and for each $k$,
$$
\int_0^t\int_{\R^d}\big|\nabla g^\eps(s,x)\big|^2\rho_k^{\eps}(s,\d x)\d s\text{ is uniformly bounded in }\eps. 
$$
Notice that we are not able to get higher regularity for $u_k^\eps$ from the estimate above, but only for the sum $\sum_ka_ku_{k}^{\eps}$. 
\end{remark}
We now state the following lemma.
\begin{lemma}\label{relative compact}
For each $k$, the sequence  $\left(\rho_k^{\eps}\right)_{\eps>0}$ is relatively compact in $C\big([0, T], \mathcal{M}(\R^d)\big)$.
\end{lemma}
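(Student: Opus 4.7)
The plan is to verify the two hypotheses of an Arzelà–Ascoli type criterion for $C([0,T], (\mathcal{M}(\R^d), d))$: (i) for each $t \in [0,T]$ the slice $\{\rho_k^\eps(t)\}_{\eps>0}$ is relatively compact in $\mathcal{M}(\R^d)$, and (ii) the family $\{t \mapsto \rho_k^\eps(t)\}_{\eps>0}$ is uniformly (in $\eps$) equicontinuous in $d$. Both properties rest crucially on the dissipation estimate furnished by Lemma \ref{apriori}.

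For (i), I would derive a uniform-in-$\eps$ second moment bound on $\rho_k^\eps$. Testing the nonlocal PDE \eqref{eps cross} against $|x|^2$ (after a routine truncation to justify the calculation) gives
$$
\frac{\d}{\d t}\int_{\R^d} |x|^2 \rho_k^\eps\,\d x = 2d\sigma - 2b_k\int_{\R^d} x \cdot \nabla g^\eps \, \rho_k^\eps\,\d x,
$$
and Young's inequality $|2 b_k\, x \cdot \nabla g^\eps| \leq b_k |x|^2 + b_k |\nabla g^\eps|^2$ combined with the dissipation bound from Lemma \ref{apriori} on $\int_0^T\!\int |\nabla g^\eps|^2 \rho_k^\eps$ allows Gronwall to produce $\sup_{\eps,\, t \in [0,T]} \int |x|^2 \rho_k^\eps(t, \d x) \leq C$, provided the initial datum has a finite second moment. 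By Prokhorov this yields tightness, hence relative compactness of each time slice in $(\mathcal{M}(\R^d), d)$.

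For (ii), I would first establish equicontinuity against a smooth test function $\phi \in C_b^2(\R^d)$. The weak form of \eqref{eps cross} gives
$$
\int \phi\, \rho_k^\eps(t)\,\d x - \int \phi\, \rho_k^\eps(s)\,\d x = \sigma \int_s^t\!\!\int \Delta \phi\, \rho_k^\eps \,\d x\,\d\tau - b_k \int_s^t\!\!\int \nabla \phi \cdot \nabla g^\eps\, \rho_k^\eps \,\d x\,\d\tau .
$$
The first term is bounded by $\sigma \|\Delta \phi\|_\infty |t-s|$ since $\rho_k^\eps$ is a probability measure. For the drift term, Cauchy–Schwarz in the joint variables $(\tau, x)$ (with respect to the measure $\rho_k^\eps\,\d x\,\d\tau$) yields
$$
\Bigl|\int_s^t\!\!\int \nabla \phi \cdot \nabla g^\eps\, \rho_k^\eps \,\d x\,\d\tau \Bigr| \leq \|\nabla \phi\|_\infty \sqrt{|t-s|}\, \Bigl(\int_s^t\!\!\int |\nabla g^\eps|^2 \rho_k^\eps \,\d x\,\d\tau\Bigr)^{1/2},
$$
which by Lemma \ref{apriori} is $\leq C \|\nabla \phi\|_\infty \sqrt{|t-s|}$ uniformly in $\eps$. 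To promote this estimate to the BL metric $d$, for $f \in BL$ I would replace $f$ by its mollification $f_\delta = f * \psi_\delta$, using $\|f - f_\delta\|_\infty \leq C\delta$ (from $\mathrm{Lip}(f)\leq 1$) together with $\|\nabla f_\delta\|_\infty \leq 1$ and $\|\Delta f_\delta\|_\infty \leq C/\delta$; the choice $\delta = \sqrt{|t-s|}$ produces a uniform-in-$\eps$ modulus of continuity of order $\sqrt{|t-s|}$.

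Combining (i) and (ii) via the standard Arzelà–Ascoli criterion yields relative compactness of $\{\rho_k^\eps\}_{\eps>0}$ in $C([0,T], \mathcal{M}(\R^d))$. The main obstacle is the time equicontinuity of the drift contribution: the field $\nabla g^\eps$ is genuinely $\eps$-singular pointwise, and only its \emph{integrated} dissipation $\int_0^T\!\int |\nabla g^\eps|^2 \rho_k^\eps$ is uniformly controlled by Lemma \ref{apriori}; the Cauchy–Schwarz step in time is exactly what converts this $L^2_{t,x}$ control against $\rho_k^\eps$ into the $\sqrt{|t-s|}$ modulus needed for equicontinuity. The moment bound is comparatively routine once one notices that the same dissipation estimate absorbs the $|\nabla g^\eps|$ contribution.
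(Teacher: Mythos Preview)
Your skeleton is the same as the paper's---Arzel\`a--Ascoli applied via pointwise-in-time tightness plus equicontinuity in the bounded-Lipschitz metric, with Lemma~\ref{apriori} supplying the crucial uniform control on $\int_0^T\!\int |\nabla g^\eps|^2\rho_k^\eps$. The route, however, is genuinely different: the paper works on the \emph{probabilistic} side via the nonlinear process $Y_k^\eps$ with $\law(Y_k^\eps(t))=\rho_k^\eps(t)$, whereas you stay on the PDE side.

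For equicontinuity, the paper simply bounds $d(\rho_k^\eps(t),\rho_k^\eps(s))\le \big(\E|Y_k^\eps(t)-Y_k^\eps(s)|^2\big)^{1/2}$ and then estimates the increment of the SDE directly; the Cauchy--Schwarz-in-time step you perform is exactly the analytic translation of this, and your mollification trick to pass from $C_b^2$ test functions to BL is a correct substitute for the coupling inequality. Both yield the same $|t-s|^{1/2}$ modulus.

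For tightness, the paper again uses the SDE: it decomposes $Y_k^\eps(t)=\xi_k - b_k\!\int_0^t\nabla g^\eps(s,Y_k^\eps(s))\,\d s + \sqrt{2\sigma}B_k(t)$ and bounds $\P[|Y_k^\eps(t)|>R]$ by the tails of the three pieces, using Chebyshev and the dissipation estimate for the drift. This needs \emph{only} that $\rho_{k,0}$ is a probability measure. Your second-moment argument is correct but, as you note, requires $\int|x|^2\rho_{k,0}<\infty$, which is \emph{not} among the paper's hypotheses ($\rho_{k,0}\in L^1\cap L^\infty$ does not imply finite second moment). This is a real, if easily repairable, gap: on the PDE side you can recover the paper's generality by testing against a smooth cutoff $\phi_R$ supported outside $B_R$ with $\|\nabla\phi_R\|_\infty\lesssim R^{-1}$, $\|\Delta\phi_R\|_\infty\lesssim R^{-2}$, and using the same Cauchy--Schwarz step on the drift; all contributions then vanish as $R\to\infty$ uniformly in $\eps$ without any moment assumption.
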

\begin{proof}
To apply the Ascoli-Arzel\`{a} theorem, we need to verify the following two claims, for each  $k$-th species,
\begin{itemize}
    \item[(1)] there is a relatively compact subset $\mathcal{K}_k\subset\mathcal{M}(\R^d)$ which is independent of $\eps$ and $t$,  that for any $t\in[0,T]$ and $\eps>0$, $\rho_k^\eps(t)\in \mathcal{K}_k$ ;
    \item[(2)]  the sequence $(\rho_k^{\eps})_{\eps>0}$ is equicontinuous, i.e., for every $\eta>0$ there exists $\delta$ such that, for all  $\eps>0$ and $t,s\in[0,T]$ such that $ |t-s|<\delta$, then it implies
    $
d(\rho_k^{\eps}(s),\rho_k^{\eps}(t))<\eta.
    $
\end{itemize}

We start with proving the first statement. A subset of $\mathcal{M}(\R^d)$ is relatively compact if and only if it is tight, then it is equivalent to show  for any $t\in[0,T]$ and $\eta>0$, there exists a compact set $K_k\subset \R^d$ with $\rho_k^{\eps}(K_k)\geq 1-\eta$ for all $\eps>0$.  
Recall the nonlinear process $Y_k^{\eps}(t)$ defined by \eqref{cross-Y} with $\law(Y_k^{\eps}(t))=\rho_k^{\eps}(t)$ satisfies the SDE
$$
\d Y_k^{\eps}(t)=-b_k\nabla g^\eps(t,Y_k^{\eps}(t))\d t+\sqrt{2\sigma }\d B_k(t).
$$
Then $\rho_k^{\eps}(K_k)\geq 1-\eta$ is equivalent to $\mathbb{P}\big[Y_k^{\eps}(t)\in K_k^c\big]\leq \eta$. We can take the compact set as a closed ball with radius $R>0$, then the probability of $Y_k^{\eps}$ being outside the closed ball can be estimated as
$$
\begin{aligned}
\mathbb{P}\Big[|Y_k^{\eps}(t)|>R\Big]  &\,  = \mathbb{P}\left[\left|Y_k^{\eps}(0)-b_k\int_0^t \nabla g^{\eps}\left(s,Y_k^{\eps}(s)\right) \d s+\sqrt{2\sigma }B_k(t)\right|>R\right]\\
\leq  &\, \mathbb{P}\left[\left|Y_k^{\eps}(0)\right|>\frac{R}{3}\right]+\mathbb{P}\left[\left|b_k\int_0^t \nabla g^{\eps}\left(s,Y_k^{\eps}(s)\right) \d s\right|>\frac{R}{3}\right]+\mathbb{P}\left[\left|\sqrt{2\sigma }B_k(t)\right|>\frac{R}{3}\right],
\end{aligned}
$$
where the first term and the third term goes to $0$ as $R\to \infty$. For the second term, we deduce 
$$
\begin{aligned}
\mathbb{P}\left[\left|b_k\int_0^t \nabla g^{\eps}\left(s,Y_k^{\eps}(s)\right) \d s\right|>\frac{R}{3}\right]  &\,  \leq \frac{9}{R^2} \E\left[\left|b_k\int_0^t \nabla g^{\eps}\left( s,Y_k^{\eps}(s)\right) \d s\right|^2\right] \\
 &\,  \leq \frac{9 tb_k^2}{R^2} \E\left[\int_0^t\left|\nabla g^{\eps}\left(s,Y_k^{\eps}(s)\right)\right|^2 d s\right] \\
 &\,  =\frac{9 tb_k^2}{R^2} \int_0^t \int_{\R^d}\big|\nabla g^{\eps}(s, x)\big|^2 \rho_k^{\eps}(s, \d x) \d s,
\end{aligned}
$$
which goes to 0 by sending $R$ to $\infty$ by Remark  \ref{remark}.

Now we prove the second claim. For $s,t\in [0,T]$, the distance between $\rho_k^{\eps}(s)$ and $\rho_k^{\eps}(t)$ has the following estimate
$$
\begin{aligned}
d\left(\rho_k^{\eps}(t), \rho_k^{\eps}(s)\right)  &\,  =\sup_{f \in B L}\left|\int_{\R^d} f(x) \rho_k^{\eps}(t,\d x)-\int_{\R^d} f(x) \rho_k^{\eps}(s, \d x)\right| \\
 &\,  =\sup _{f \in B L}\big|\E\left[f\big(Y_k^{\eps}(t)\big)\right]-\E\left[f\big(Y_k^{\eps}(s)\big)\right]\big| \\
 &\,  \leq \Big(\E\big[\big|Y_k^{\eps}(t)-Y_k^{\eps}(s)\big|^2\big]\Big)^{1 / 2}, \\
 \end{aligned}
$$
and by Minkowski's inequality 
$$
\begin{aligned}
\Big(\E\big[\big|Y_k^{\eps}(t)-Y_k^{\eps}(s)\big|^2\big]&\,\Big)^{1 / 2}
=  \Big(\E\Big[\Big|b_k\int_s^t \nabla g^{\eps}\left(r, Y_k^{\eps}(r)\right) \d r+\sqrt{2\sigma }B_k(t)-\sqrt{2\sigma }B_k(s)\Big|^2\Big]\Big)^{1 / 2} \\
  \leq &\, b_k\Big(\E\Big[\Big|\int_s^t \nabla g^{\eps}\big(r,Y_k^{\eps}(r)\big) \d r\Big|^2\Big]\Big)^{1 / 2}+\sqrt{2\sigma }\Big(\E\left[\left|B_k(t)-B_k(s)\right|^2\right]\Big)^{1 / 2} \\
   \leq &\,b_k\Big(\E\Big[|t-s| \int_s^t\left|\nabla g^{\eps}\big(r,Y_k^{\eps}(r)\big)\right|^2 \d r\Big]\Big)^{1 / 2}+\sqrt{2\sigma }|t-s|^{1 / 2} \\
  =&\, |t-s|^{1 / 2}\Big[b_k\Big[\int_s^t \int_{\R^d}\big|\nabla g^{\eps}(r,x )\big|^2 \rho_k^{\eps}(r, \d x) \d r\Big)^{1 / 2}+\sqrt{2\sigma }\Big] \\
  \leq &\, C|t-s|^{1 / 2} ,    
\end{aligned}
$$
where the constant $C$ is independent with $\eps$ by Remark  \ref{remark} again. In conclusion, Lemma \ref{relative compact} is proved. 
\end{proof}
We have shown that for each species $k$ the sequence $\left(\rho_k^{\eps}\right)_{\eps>0}$ has a convergent subsequence. We now fix such a convergent subsequence, which is still denoted by $\left(\rho_k^{\eps}\right)_{\eps>0}$. Let $\rho_{k} \in C\left([0, T], \mathcal{M}\left(\R^d\right)\right)$ be its limit, i.e.
\begin{equation}
\rho_k^{\eps}\rightarrow \rho_{k}
\quad\text{ 
in}\quad C([0, T], \mathcal{M}(\R^d))\text{ as } \eps\to0.
\end{equation}
\begin{lemma}\label{vrho}
For each species $k$, the sequence $\left(u_{k}^{\eps}\right)_{\eps>0}$  converges to $\rho_{k}$ in $C\left([0, T], \mathcal{M}\left(\R^d\right)\right)$ up to a subsequence. 
\end{lemma}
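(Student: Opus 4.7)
The plan is to show that $u_k^\eps$ and $\rho_k^\eps$ are asymptotically close in the bounded-Lipschitz metric $d$ uniformly in $t$, and then invoke the already established convergence $\rho_k^\eps\to\rho_k$ in $C([0,T],\mathcal{M}(\R^d))$ and the triangle inequality. Since $V^\eps\geq 0$, $V^\eps$ is even with unit mass, and $\rho_k^\eps(t)$ is a probability measure, the regularisation $u_k^\eps(t)=V^\eps\ast\rho_k^\eps(t)$ is again a probability measure (absolutely continuous with respect to Lebesgue), so it belongs to $\mathcal{M}(\R^d)$.

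The main step is a pointwise (in $t$) estimate of $d(u_k^\eps(t),\rho_k^\eps(t))$. For any $f\in BL$, by Fubini and the fact that $V^\eps$ is even,
$$
\int_{\R^d} f(x)\,u_k^\eps(t,x)\,\d x
= \int_{\R^d}(V^\eps\ast f)(y)\,\rho_k^\eps(t,\d y),
$$
so that
$$
\left|\int_{\R^d} f\,\d u_k^\eps(t)-\int_{\R^d} f\,\d\rho_k^\eps(t)\right|
= \left|\int_{\R^d}\big(V^\eps\ast f-f\big)(y)\,\rho_k^\eps(t,\d y)\right|.
$$
Using that $f$ has Lipschitz constant $1$ and that $V^\eps$ is a probability density,
$$
\big|V^\eps\ast f(y)-f(y)\big|
\leq \int_{\R^d} V^\eps(z)\,|z|\,\d z
= \eps\int_{\R^d} V(w)\,|w|\,\d w =: \tilde C_V\,\eps,
$$
with $\tilde C_V<\infty$ since $V\in C_c^\infty(\R^d)$. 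Taking the supremum over $f\in BL$ and using that $\rho_k^\eps(t)$ has unit mass,
$$
\sup_{t\in[0,T]} d\big(u_k^\eps(t),\rho_k^\eps(t)\big)\leq \tilde C_V\,\eps.
$$

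Combining this uniform bound with the already extracted convergence $\rho_k^\eps\to\rho_k$ in $C([0,T],\mathcal{M}(\R^d))$ and applying the triangle inequality,
$$
\sup_{t\in[0,T]} d\big(u_k^\eps(t),\rho_k(t)\big)
\leq \sup_{t\in[0,T]} d\big(u_k^\eps(t),\rho_k^\eps(t)\big)
+\sup_{t\in[0,T]} d\big(\rho_k^\eps(t),\rho_k(t)\big)\longrightarrow 0
$$
as $\eps\to 0$ along the previously extracted subsequence. This gives convergence of $(u_k^\eps)_{\eps>0}$ to $\rho_k$ in $C([0,T],\mathcal{M}(\R^d))$, completing the proof.

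There is no genuine obstacle here: the only subtlety is making sure that the comparison between $u_k^\eps$ and $\rho_k^\eps$ is uniform in $t$, which follows automatically from the Lipschitz nature of the test functions in $BL$ and the fact that the $\rho_k^\eps(t)$ are probability measures.
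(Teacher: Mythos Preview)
Your proof is correct and follows essentially the same approach as the paper: both show $\sup_{t\in[0,T]} d(u_k^\eps(t),\rho_k^\eps(t))\leq C\eps$ via the Lipschitz bound on $f\in BL$ and the evenness/unit-mass of $V^\eps$, then conclude from the already established convergence $\rho_k^\eps\to\rho_k$. You are slightly more explicit about the triangle inequality step, which the paper leaves implicit.
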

\begin{proof}
The lemma can be implied by
$$
\sup _{0 \leq t \leq T} d\left(\rho^{\eps}_k(t), u^{ \eps}_k(t)\right) \rightarrow 0 \quad\text{as} \quad\varepsilon \rightarrow 0.
$$
 To verify this, we notice that for any $t \in[0, T]$ and $f \in B L$, the following equality holds
$$
\begin{aligned}
 \Big|\int_{\mathbb{R}^d} f(x) u_k^{\eps}(t,x) \d x-\int_{\mathbb{R}^d} f(x) \rho_k^{\eps}(t,\d x)\Big| 
=  &\,  \Big|\int_{\mathbb{R}^d} f(x)\left(\rho_k^{\eps}(t) \ast V^{\varepsilon}\right)(x)\d x-\int_{\mathbb{R}^d} f(x) \rho_k^{\eps}(t,\d x)\Big|\\=  &\,  \Big|\int_{\mathbb{R}^d}\Big(\left(f \ast  V^{\varepsilon}\right)(x)-f(x)\Big) \rho_k^{\eps}(t,\d x)\Big| , \\
\end{aligned}
$$
where $V^\eps$ is even. And then it holds
$$
\begin{aligned}
 &\, \Big|\int_{\mathbb{R}^d} f(x) u_k^{\eps}(t,x) \d x-\int_{\R^d} f(x) \rho_k^{\eps}(t,\d x)\Big|\\
\leq  &\,  \int_{\R^d}\Big(\int_{\R^d}\big|f(x+y)-f(y)\big| V^{\varepsilon}(x) \d x\Big) \rho_k^{\eps}(t,\d y) \\
\leq  &\,  \int_{\mathbb{R}^d}|x| V^{\varepsilon}(x) \d x \int_{\mathbb{R}^d} \rho_k^{\eps}(t,\d y) =C\eps ,
\end{aligned}
$$
which implies our lemma.
\end{proof}
By a priori estimate Lemma \ref{apriori}, for any species $k$, the sequence $(u_{k}^{\eps})_{\eps>0}$  is bounded in $L^{\infty}([0,T], L^m(\R^d))$.  Banach-Alaoglu theorem implies that, up to a subsequence, it weakly$^\ast$ converges in $L^{\infty}([0,T], L^m(\R^d))$. 
Thus by  Lemma \ref{vrho}, we get $\rho_k\in L^{\infty}([0,T], L^m(\R^d))$.  Next, we are going to prove the convergence also holds in $L^m([0,T]\times \R^d)$.
\begin{lemma}\label{Lm convergence} 
For each species $k$, up to a subsequence, $\left(u_{k}^{\eps}\right)_{\eps>0}$ strongly converges in $L^m([0,T]\times\R^d)$ to $\rho_{k}$. 
\end{lemma}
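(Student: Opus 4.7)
The plan is to upgrade the weak-$*$ compactness of $(u_k^\eps)_{\eps>0}$ in $L^\infty(0,T;L^m(\R^d))$ (available from Remark \ref{remark}) to strong $L^m((0,T)\times\R^d)$ convergence via an Aubin--Lions type compactness argument carried out species-by-species. The starting observation, already flagged in Remark \ref{remark}, is that the gradient-flow energy estimate of Lemma \ref{apriori} only yields regularity of the sum $(\sum_k a_k u_k^\eps)^{m/2}$ in $L^2(0,T;H^1(\R^d))$, so the regularity needed for each individual $u_k^\eps$ must come from the genuinely diffusive term $\sigma\Delta u_k^\eps$ present in the nonlocal PDE \eqref{eps cross}.

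First, I would derive a uniform-in-$\eps$ spatial estimate for each species by testing \eqref{eps cross}, written in divergence form as $\p_t u_k^\eps=\nabla\cdot\bigl(V^\eps\ast(b_k\rho_k^\eps\nabla g^\eps)+\sigma\nabla u_k^\eps\bigr)$, against the nonlinear function $(u_k^\eps)^{m-1}$. After integration by parts, the left-hand side produces $\frac{1}{m}\frac{d}{dt}\|u_k^\eps\|_{L^m}^m$ together with the dissipation $\frac{4\sigma(m-1)}{m^2}\|\nabla(u_k^\eps)^{m/2}\|_{L^2}^2$ coming from the linear diffusion, while the right-hand side is a cross term involving $V^\eps\ast(b_k\rho_k^\eps\nabla g^\eps)$. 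The drift is handled by Young's inequality, which absorbs half of the dissipation, together with the pointwise Cauchy--Schwarz estimate
$$\bigl|V^\eps\ast(\rho_k^\eps\nabla g^\eps)(x)\bigr|^2\leq u_k^\eps(x)\,\bigl(V^\eps\ast(\rho_k^\eps|\nabla g^\eps|^2)\bigr)(x),$$
combined with the a priori bound $\int_0^T\int\rho_k^\eps|\nabla g^\eps|^2\,dx\,ds\leq C$ of Lemma \ref{apriori}. The outcome is a uniform bound $\sup_{\eps>0}\|(u_k^\eps)^{m/2}\|_{L^2(0,T;H^1(\R^d))}<\infty$, supplying compact spatial regularity of each species.

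Next I would read off time regularity directly from the PDE: writing $\p_t u_k^\eps=\nabla\cdot F_k^\eps$ with $F_k^\eps=V^\eps\ast(b_k\rho_k^\eps\nabla g^\eps)+\sigma\nabla u_k^\eps$, the Cauchy--Schwarz estimate $\|\rho_k^\eps\nabla g^\eps\|_{L^1_x}^2\leq\int\rho_k^\eps|\nabla g^\eps|^2\,dx$ places the first summand uniformly in $L^2_t(L^1_x)$, while the second summand is controlled through the first-step bound and the chain-rule identity $\nabla u_k^\eps=\frac{2}{m}(u_k^\eps)^{1-m/2}\nabla(u_k^\eps)^{m/2}$. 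This puts $\p_t u_k^\eps$ uniformly in a negative Sobolev space, say $L^1\bigl(0,T;W^{-1,q}_{\mathrm{loc}}(\R^d)\bigr)$ for a suitable $q>1$. Applying the Aubin--Lions lemma on $[0,T]\times B_R$ for each $R>0$ then yields strong precompactness of $(u_k^\eps)_{\eps>0}$ in $L^m((0,T)\times B_R)$ along a subsequence.

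Finally, to pass from local to global strong convergence I would establish tightness at infinity: using the stochastic representation $u_k^\eps=V^\eps\ast\law(Y_k^\eps)$ and a weighted energy estimate (testing against $(1+|x|^2)^{p/2}(u_k^\eps)^{m-1}$ for some $p>0$, in the spirit of the moment control for $Y_k^\eps$ derived in the proof of Lemma \ref{relative compact}), show that $\sup_\eps\iint_{|x|>R}(u_k^\eps)^m\,dx\,dt\to 0$ as $R\to\infty$. A diagonal extraction then produces a strongly convergent subsequence in $L^m((0,T)\times\R^d)$ whose limit must coincide with $\rho_k$ by Lemma \ref{vrho}. The main obstacle is the first step: since Remark \ref{remark} explicitly warns that the gradient-flow a priori estimate does not separate the species, the nonlocal drift has to be manipulated without losing $\eps$-uniformity, and it is precisely the pointwise Cauchy--Schwarz identity above, paired with the nonlinear test function $(u_k^\eps)^{m-1}$, that avoids the otherwise inevitable factor $\|V^\eps\|_{L^2}\sim\eps^{-d/2}$ arising from naive applications of Young's convolution inequality.
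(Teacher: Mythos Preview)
Your overall plan (extract species-wise spatial regularity from the linear diffusion, combine with time regularity, apply Aubin--Lions locally, then glue by tightness) is a reasonable strategy, but Step~1 does not close as written, and this is precisely the obstruction the paper points out in Remark~\ref{remark}.

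After testing \eqref{eps cross} against $(u_k^\eps)^{m-1}$, applying Young's inequality and your pointwise Cauchy--Schwarz estimate, the term you must control is
\[
\int_0^T\!\!\int_{\R^d}(u_k^\eps)^{m-1}\,\bigl(V^\eps\ast(\rho_k^\eps|\nabla g^\eps|^2)\bigr)\,\d x\,\d t
\;=\;\int_0^T\!\!\int_{\R^d}\bigl(V^\eps\ast(u_k^\eps)^{m-1}\bigr)\,\rho_k^\eps|\nabla g^\eps|^2\,\d x\,\d t.
\]
Lemma~\ref{apriori} gives only $\rho_k^\eps|\nabla g^\eps|^2\in L^1([0,T]\times\R^d)$ uniformly in $\eps$, so to use it you would need $(u_k^\eps)^{m-1}$ (or its mollification) to be bounded in $L^\infty$ uniformly in $\eps$; the only available bound is $(u_k^\eps)^{m-1}\in L^\infty_t(L^{m/(m-1)}_x)$. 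No Gr\"onwall closure helps, because the right-hand side is not dominated by a multiple of $\|u_k^\eps\|_{L^m}^m$, and absorbing a further fraction of the dissipation via Sobolev embedding would require $\rho_k^\eps|\nabla g^\eps|^2$ in some $L^{r'}$ with $r'>1$, which the a~priori estimates do not supply. Thus the uniform bound $\|(u_k^\eps)^{m/2}\|_{L^2(0,T;H^1)}<\infty$ does not follow from the argument you give.

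The paper avoids this by not attempting an $L^2$-type energy estimate for each species. Instead it writes $u_k^\eps$ in mild (Duhamel) form with the heat kernel,
\[
u_k^\eps(t)=\Gamma(t)\ast u_k^\eps(0)+\int_0^t\nabla\Gamma(t-s)\ast f_k^\eps(s)\,\d s,\qquad f_k^\eps=V^\eps\ast(b_k\rho_k^\eps\nabla g^\eps),
\]
and observes that $\|f_k^\eps\|_{L^1([0,T]\times\R^d)}\le \bigl(T\int_0^T\!\!\int b_k^2\rho_k^\eps|\nabla g^\eps|^2\bigr)^{1/2}$ is uniformly bounded by Lemma~\ref{apriori}. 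Since $\nabla\Gamma\in L^1([0,T],X_\alpha)$ for any $\alpha<1$, this yields a uniform bound on $u_k^\eps$ in the fractional space $L^1([0,T],X_\alpha)$. Compactness then comes from the interpolation inequality $\|f\|_{L^1(B_R)}\le\delta\|f\|_{X_\alpha}+C_\delta\|f\|_{H^{-s}(B_R)}$ together with the already-known convergence in $C([0,T],\mathcal M(\R^d))$ (Lemma~\ref{vrho}), which provides the $H^{-s}$ convergence for free. The passage from $L^1_{\mathrm{loc}}$ to global $L^m$ is via Vitali, using the extra integrability $(u_k^\eps)^{md/(d-1)}\in L^1$ coming from the $H^1$ bound on the \emph{sum} $(\sum_l a_l u_l^\eps)^{m/2}$. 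The point is that the mild formulation only requires an $L^1$ bound on the flux, which is exactly what the a~priori estimate delivers, whereas your energy approach implicitly demands an $L^2$-weighted bound that is unavailable.
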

We  claim that it suffices to prove the convergence result in $L^1([0,T]\times B_R)$
for any fixed $R>0$, i.e.,

\begin{equation}\label{LBR}
u_{k}^{\eps}\rightarrow \rho_{k}
\text{ strongly in } L^1([0,T]\times B_R) \text{ as } \eps\to0.
\end{equation}
It is true because of the following remark.
\begin{remark}
By \textit{Vitali convergence theorem},  the sequence $(u_{k}^{\eps})_{\eps>0}$ converges in $L^m([0,T]\times \R^d)$ to $\rho_k$ if and only if 
\begin{itemize}
    \item [(i)] the sequence $(u_{k}^{\eps})_{\eps>0}$ converges in the Lebesgue measure on $[0,T]\times \R^d$ to $\rho_k$;
\item[(ii)] the functions $(u_{k}^{\eps})^m$ are uniformly integrable;
\item[(iii)] for every $\eta>0$, there exists a set $E_\eta\in [0,T]\times \R^d$ of finite measure, such that $\iint_{E_\eta^c}|u_{k}^{\eps} |^m<\eta$ for all $\eps$.    
\end{itemize}
\end{remark}
Actually, by Lemma \ref{vrho} and Prokhorov's theorem, for any $k$, $(u_{k}^{\eps})_{\eps\geq0}$ are uniformly tight. Thus for any $\eta_k>0$ there exists $R_{\eta_k}>0$ such that for any $\eps>0$, it holds
$$
\int_0^T\int_{B_{R_{\eta_k}}^c} u_{k}^{\eps}(t, x) \d x \d t \leq \eta_k.
$$
Then for any $\delta>0$, there exists a ball $B_{R_\delta}$ such that for any $\eps>0$
$$
\int_0^T\int_{B_{R_\delta}^c} u_{k}^{\eps}(t, x) \d x \d t \leq \frac{\delta}{4}\quad \text{and}\quad \int_0^T\int_{B_{R_\delta}^c} \rho_{k}(t, x) \d x \d t \leq \frac{\delta}{4}.
$$
And we have
$$
\begin{aligned}
\int_0^T\int_{\R^d}|u_{k}^{\eps}-\rho_{k}|\d x\d t=  &\, \int_0^T\int_{B_{R_\delta}^c}|u_{k}^{\eps}-\rho_{k}|\d x \d t+\int_0^T\int_{B_{R_\delta}}|u_{k}^{\eps}-\rho_{k}|\d x\d t\\\leq &\,  \frac{\delta}{2}+\int_0^T\int_{B_{R_\delta}}|u_{k}^{\eps}-\rho_{k}|\d x\d t.    
\end{aligned}
$$
That is to say if for any $B_R$ the sequence $(u_{k}^{\eps})_{\eps>0}$ converges to $\rho_k$ in $L^1([0,T]\times B_R)$, then it implies the convergence also holds in $L^1([0,T]\times \R^d)$, which further implies the convergence holds in Lebesgue measure. Statement (i) of the remark is satisfied for sure.

By Remark \ref{remark} the sequence $\left(\sum_ka_ku_{k}^{\eps}\right)^{m / 2}$ is bounded in $L^2\left([0, T], H^1\left(\mathbb{R}^d\right)\right)$.
If $\omega\in H^1(\R^d)$, 
then we have $\omega^2\in W^{1,1}(\R^d)\subset L^{d/(d-1)}(\R^d)$. So $\left(\sum_ka_ku_{k}^{\eps}\right)^{m}$ is bounded in 
\\$L^2\left([0, T], L^{d/(d-1)}(\R^d)\right)$. By the positivity of each $a_k$ and $u_{k}^{\eps}$, for any $k$, it holds $$(u_{k}^{\eps})^{m}\in L^2\big([0, T], L^{d/(d-1)}(\R^d)\big)\subset L^{d/(d-1)}\big([0, T]\times\R^d\big), $$
which deduces that the following uniform in $\eps$ bound  holds
 $$ \int_0^T \int_{\R^d} (u_{k}^{\eps}(t,x))^{\frac{m d}{d-1}}\d x \d t \leq C_k.$$
Since for any set $A\subset[0,T]\times \R^d$ with the characteristic function $\chi_A$ and volume $|A|$, we have 
$$
\begin{aligned}
\lim_{|A|\to 0}\sup_{\eps>0}\iint_A(u_{k}^{\eps}(t,x))^m\d x \d t\leq &\,  \lim_{|A|\to 0}\sup_{\eps>0}\big(\int_0^T \int_{\R^d}(\chi_A)^d\d x\big)^\frac{1}{d}\big(\int_0^T \int_{\R^d}(u_{k}^{\eps}(t,x))^\frac{md}{d-1}\d x\d t\big)^\frac{d-1}{d}\\\leq &\,  \lim_{|A|\to 0}|A|^\frac{1}{d}C_k^\frac{d-1}{d}=0,    
\end{aligned}
$$
which verifies (ii).
And we get
$$
\begin{aligned}
 &\,  \int_0^T \int_{B_{R_{\eta_k}}^c} (u_{k}^{\eps}(t, x))^m \d x \d t \\
\leq  &\,  \Big(\int_0^T \int_{B_{R_{\eta_k}}^c} u_{k}^{\eps}(t,x) \d x \d t\Big)^{\frac{m}{1-d+m d}}\Big(\int_0^T \int_{B_{R_{\eta_k}}^c} u_{k}^{\eps}(t, x)^{\frac{m d}{d-1}} \d x \d t\Big)^{\frac{(m-1)(d-1)}{1-d+m d}} \\
\leq  &\,  C_k^\prime \eta_k^{\frac{m}{1-d+m d}},
\end{aligned}
$$
i.e., (iii) has been verified, then our claim follows. 

\medskip

The following discussion  is analogous to that in \cite{FP08}. To prove our strong convergence result \eqref{LBR}, we need to prove for each $k$, the sequence $(u_{k}^{\eps})_{\eps>0}$ is a Cauchy sequence in $L^1([0,T]\times B_R)$. From Lemma \ref{apriori} and Remark \ref{remark}, we can only deduce higher regularity about the sum $\sum_ka_ku_k^\eps$. For single species $u_k^\eps$, we will take advantage of the mild form of the solution.  We firstly introduce a fractional-type Sobolev space $X_\alpha$, for $0<\alpha<1$ :
$$
X_\alpha:=\left\{w \in L^1(\mathbb{R}^d) \Big| \sup _{0<|h| \leq 1} \frac{\|w(\cdot+h)-w(\cdot)\|_{L^1(\mathbb{R}^d)}}{|h|^\alpha}<+\infty\right\}.
$$
One can check that this is a Banach space endowed with the norm
$$
\|w\|_{X_\alpha}:=\|w\|_{L^1(\mathbb{R}^d)}+\sup _{0<|h| \leq 1} \frac{\|w(\cdot+h)-w(\cdot)\|_{L^1(\mathbb{R}^d)}}{|h|^\alpha} .
$$
By the Riesz-Fréchet-Kolmogorov theorem \cite[Theorem 4.26]{B11},  
any bounded subset of $X_\alpha$ is compact in $L^1(\Omega)$ for any bounded domain $\Omega \subset \mathbb{R}^d$. 
\begin{lemma}\label{Xalpha} 
The sequence $(u_{k}^{\eps})_{\eps>0}$ is uniformly bounded in $L^1\left([0, T], X_\alpha\right)$ for any $0<\alpha<1$.
\end{lemma}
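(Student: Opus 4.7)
The plan is to use the mild form of the equation satisfied by $u_k^\eps$: writing $\p_t u_k^\eps = \sigma\Delta u_k^\eps + \nabla\cdot F_k^\eps$ with flux $F_k^\eps := (b_k\rho_k^\eps \nabla g^\eps)\ast V^\eps$ and initial datum $u_{k,0}^\eps = V^\eps \ast \rho_{k,0}$, Duhamel's formula yields
\[
u_k^\eps(t) = e^{\sigma t\Delta} u_{k,0}^\eps + \int_0^t \nabla\cdot\bigl[e^{\sigma(t-s)\Delta} F_k^\eps(s)\bigr]\,\d s.
\]
The strategy is to exploit the regularising action of the heat semigroup to produce the fractional spatial regularity required by $X_\alpha$, trading a derivative of the Gaussian against a factor $|h|^\alpha$, while keeping all estimates uniform in $\eps$.

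The core of the argument consists of two translation estimates for the heat kernel $G_t$ of $e^{\sigma t\Delta}$: the standard bounds $\|G_t(\cdot+h)-G_t\|_{L^1} \leq C \min(1, |h|/\sqrt t)$ and $\|\nabla G_t(\cdot+h)-\nabla G_t\|_{L^1} \leq C\min(t^{-1/2}, |h|/t)$. Combining these with the elementary interpolation $\min(a,b)\leq a^{1-\alpha} b^\alpha$ for $\alpha\in(0,1)$ gives
\[
\|e^{\sigma t\Delta} u_{k,0}^\eps(\cdot+h) - e^{\sigma t \Delta} u_{k,0}^\eps\|_{L^1} \leq C|h|^\alpha t^{-\alpha/2}\|\rho_{k,0}\|_{L^1},
\]
and
\[
\Bigl\|\bigl[\nabla e^{\sigma(t-s)\Delta} F_k^\eps(s)\bigr](\cdot+h)-\nabla e^{\sigma(t-s)\Delta}F_k^\eps(s)\Bigr\|_{L^1} \leq C|h|^\alpha (t-s)^{-(1+\alpha)/2} \|F_k^\eps(s)\|_{L^1}.
\]

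Next I would bound the flux uniformly in $\eps$: since $\|V^\eps\|_{L^1}=1$, Young's inequality followed by Cauchy--Schwarz (using that $\rho_k^\eps$ is a probability measure) gives
\[
\|F_k^\eps(s)\|_{L^1} \leq b_k\Bigl(\int_{\R^d} |\nabla g^\eps(s,x)|^2\,\rho_k^\eps(s,\d x)\Bigr)^{1/2},
\]
so $\|F_k^\eps\|_{L^1}$ belongs to $L^2(0,T)$ uniformly in $\eps$ by the a priori bound in Lemma \ref{apriori}. Dividing the pointwise estimates by $|h|^\alpha$, taking the supremum over $0<|h|\leq 1$, integrating in $t\in[0,T]$ and applying Fubini on the Duhamel contribution reduces everything to an integral of the form
\[
\int_0^T(T-s)^{(1-\alpha)/2}\|F_k^\eps(s)\|_{L^1}\,\d s,
\]
which is finite and bounded uniformly in $\eps$. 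Together with the trivial bound $\|u_k^\eps(t)\|_{L^1}=1$ coming from $\rho_k^\eps$ being a probability measure, this delivers the desired $L^1([0,T],X_\alpha)$ control.

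The main obstacle is the interplay between the singularity $(t-s)^{-(1+\alpha)/2}$ introduced by differentiating the Gaussian and the Hölder exponent: the interpolation step must leave a $t$-integrable singularity, which forces $\alpha<1$ and exactly matches the range appearing in the statement. A secondary subtlety is preserving $\eps$-uniformity throughout: one must not try to push derivatives onto the mollifier $V^\eps$, which would produce blow-up factors $1/\eps$; instead, all regularisation has to be transferred to the heat kernel $G_t$, and the initial datum must be handled only through the $\eps$-free bound $\|u_{k,0}^\eps\|_{L^1}\leq\|\rho_{k,0}\|_{L^1}$.
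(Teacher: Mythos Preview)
Your proposal is correct and follows essentially the same route as the paper: write $u_k^\eps$ in mild form via the heat semigroup, use that $\Gamma,\nabla\Gamma\in L^1([0,T],X_\alpha)$ for $\alpha<1$ (which you derive explicitly via the interpolation $\min(a,b)\le a^{1-\alpha}b^\alpha$, while the paper cites \cite{FP08}), and bound the flux $F_k^\eps$ in $L^1$ uniformly in $\eps$ through Cauchy--Schwarz and the a~priori estimate of Lemma~\ref{apriori}. The only cosmetic difference is that the paper packages the Duhamel term via the single convolution inequality $\|\nabla\Gamma\ast_{t,x}f\|_{L^1([0,T],X_\alpha)}\le\|\nabla\Gamma\|_{L^1([0,T],X_\alpha)}\|f\|_{L^1([0,T]\times\R^d)}$, whereas you carry out the $s$-integration by hand.
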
 
\begin{proof}
We recall that
$$
\p_t u_{k}^{\eps}=\nabla\cdot\big(b_k\rho_k^{\eps}\nabla g^\eps\big)\ast  V^\eps+\sigma  \Delta u_{k}^{\eps}, \quad u_{k}^{\eps} (0)=\rho_{k,0}\ast V^\eps.
$$
Let 
$f^{\eps}_k=\big(b_k\rho_k^{\eps}\nabla g^\eps\big)\ast  V^\eps$, then we can write above equation into mild form as:
$$
\begin{aligned}
u^{\eps}_{k}(t)  &\,  =\Gamma(t) \ast_x  u^{\eps}_{k}(0)+\int_0^t\left(\Gamma(t-s) \ast_x \operatorname{div} f^{\eps}_{k}(s)\right) \d s \\
 &\,  =\Gamma(t) \ast_x  u^{\eps}_{k}(0)+\int_0^t\left(\nabla \Gamma(t-s) \ast_x  f^{\eps}_{k}(s)\right) \d s,
\end{aligned}
$$
where $\Gamma(t, x)$ is the heat kernel given by
$$
\Gamma(t, x):= \begin{cases}\frac{1}{(4\sigma \pi t)^{d / 2}} e^{-\frac{|x|^2}{4\sigma t}}  &\,  \text { for } t>0 \\ \delta_x  &\,  \text { for } t=0.\end{cases}
$$
In \cite{FP08}, for any $0<\alpha<1$
 we know $
\Gamma, \nabla \Gamma \in L^1\left([0, T], X_\alpha\right)
$. 
\iffalse
which is from the estimate
$$
\begin{aligned}
 &\, \|\nabla\Gamma(t,\cdot+h)-\nabla\Gamma(t,\cdot)\|_{L^1(\R^d)}\\= &\, \|\nabla\Gamma(t,\cdot+h)-\nabla\Gamma(t,\cdot)\|^\alpha_{L^1(\R^d)}\|\nabla\Gamma(t,\cdot+h)-\nabla\Gamma(t,\cdot)\|^{1-\alpha}_{L^1(\R^d)}\\
\leq  &\,  |h|^\alpha \|\nabla^2\Gamma(t,\cdot)\|_{L^1(\R^d)}^\alpha\big(2\|\nabla\Gamma(t,\cdot)\|_{L^1(\R^d)}\big)^{1-\alpha}\leq Ct^{-\frac{1+\alpha}{2}}|h|^\alpha,
\end{aligned}
$$
which proves $\nabla\Gamma\in L^1([0,T],X_\alpha)$ for any $0<\alpha<1$. 
\fi
And by the fact
$$
\Big\|\int_0^t\int_{\R^d}\nabla_x \Gamma(t-s,x-z)f_k^\eps(s,z)\d z\d s\Big\|_{L^1([0,T],X_\alpha)}\leq \big\|\nabla \Gamma\big\|_{L^1([0,T],X_\alpha)}\big\|f\big\|_{L^1([0,T]\times\R^d)},
$$
one need the $L^1$-estimate of $f_k^\eps$ that
$$
\begin{aligned}
\big\|f^{\eps}_{k}\big\|_{L^1([0, T] \times \R^d)}  &\,  \leq \int_0^T \int_{\mathbb{R}^d}b_k\left|\nabla g^{\eps}(t, x)\right| \rho^{\varepsilon}_k(t, \d x) \d t \\
 &\,  \leq\Big(T \int_0^T \int_{\mathbb{R}^d}b_k^2\left|\nabla g^{\varepsilon}(t, x)\right|^2 \rho^{\varepsilon}_k(t,\d  x) \d t\Big)^{1 / 2}.
\end{aligned}
$$
Therefore, the sequence $(u_{k}^{\eps})_{\eps>0}$ is uniformly bounded in $L^1\left([0, T], X_\alpha\right)$.
A priori estimate Lemma \ref{apriori}, for any species $k$ tells us $f^{\eps}_{k}$ is bounded in $L^1\left([0, T] \times \mathbb{R}^d\right)$  uniformly in $\eps$. 
\end{proof}
Fix $0<\alpha<1$, and take some big $s>0$ such that $\mathcal{M}(B_R)\hookrightarrow H^{-s}(B_R)$ continuously. And for any $\delta>0$, there exists a constant $C_\delta$ such that for any smooth function $f$ on $\R^d$, the following inequality holds (see also \cite{FP08})
\begin{equation}\label{bounded LBR}
 \|f\|_{L^1(B_R)}\leq \delta \|f\|_{X_\alpha} +C_\delta \|f\|_{H^{-s}(B_R)}  . 
\end{equation}
Taking $\eps$ and $\eps^{\prime}$, applying \eqref{bounded LBR} to $u_{k}^{\eps}(t)-u_{k}^{\eps^{\prime}}(t)$ and integrating in time, we obtain
$$
\begin{aligned}
\big\|u_{k}^{\eps}-u_{k}^{\eps^{\prime}}\big\|_{L^1\left([0, T] \times B_R\right)}  &\,  \leq \delta\big\|u_{k}^{\eps}-u_{k}^{\eps^{\prime}}\big\|_{L^1\left([0, T], X_\alpha\right)}  +C_\delta\big\|u_{k}^{\eps}-u_{k}^{\eps^{\prime}}\big\|_{L^1\left([0, T], H^{-s}\left(B_R\right)\right)} \\
 &\,  \leq \delta\Big(\big\|u_{k}^{\eps}\big\|_{L^1\left([0, T], X_\alpha\right)}+\big\|u_{k}^{\eps^{\prime}}\big\|_{L^1\left([0, T], X_\alpha\right)}\Big) +C_\delta\big\|u_{k}^{\eps}-u_{k}^{\eps^{\prime}}\big\|_{L^1\left([0, T], H^{-s}\left(B_R\right)\right)} \\
 &\,  \leq C_k\big(\delta+C_\delta\int_0^Td(u_{k}^{\eps}(t),u_{k}^{\eps^{\prime}}(t))\d t\big).
\end{aligned}
$$
By the convergence of $(u_{k}^{\eps})_{\eps>0}$ in $C([0,T],\mathcal{M}(\R^d))$, we have
$$
\limsup_{\eps,\eps^{\prime}\to 0}\big\|u_{k}^{\eps}-u_{k}^{\eps^{\prime}}\big\|_{L^1\left([0, T] \times B_R\right)}\leq C_k\delta,
$$
which implies that $\left(u^{\eps}_{k}\right)_{\varepsilon>0}$ is a Cauchy sequence in $L^1\left([0, T] \times B_R\right)$ by the arbitrariness of $\delta$. Together with claim \eqref{LBR}, Lemma \ref{Lm convergence} has been proved. 

We finally show that for any $k$, the limit $\rho_k$ is a weak solution  of cross-diffusion system \eqref{cross-diffusion1}  with initial data $\rho_{k,0}$ as in Definition \ref{definition}. 

\begin{proposition}\label{pass}
For each $k$ species and any test function $f\in C^1([0,T], C^2_b(\R^d))$,  the limit $\rho_{k}$ satisfies the following equation:
\begin{equation}\label{weakk}
\begin{aligned}
      &\int_{\R^d}f(t,x)\rho_k(t,x)
\d x+  \int_0^t\int_{\R^d}\p_s f(s,x)\rho_k(s,x)\d x\d s=\int_{\R^d} f(0,x)\rho_{k,0}(x)\d x   \\
 &\, +\sigma \int_0^t\int_{\R^d}\Delta f(s,x) \rho_k(s,x)\d x\d s  -\int_0^t\int_{\R^d}b_k\rho_k(s,x)\nabla f(s,x) \cdot\nabla\big(\sum_{l=1}^{n}a_{l} \rho_l(s,x)\big)^{m-1} \d x\d s.
    \end{aligned}
\end{equation}
\end{proposition}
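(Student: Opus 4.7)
The strategy is to depart from the weak formulation of the nonlocal equation \eqref{eps cross} satisfied by $\rho_k^\eps$ and pass to the limit as $\eps\to 0$ term by term, using the compactness and convergence results established in Lemmas \ref{relative compact}, \ref{vrho} and \ref{Lm convergence}. Since $\rho_k^\eps$ is a measure-valued solution of \eqref{eps cross}, it satisfies the analogue of \eqref{weakk} with $\rho_k$ replaced by $\rho_k^\eps$ and $\nabla\big(\sum_l a_l\rho_l\big)^{m-1}$ replaced by $\nabla g^\eps$. The linear terms pass to the limit directly because $\rho_k^\eps\to\rho_k$ in $C([0,T],\mathcal{M}(\R^d))$ and $f,\p_sf,\Delta f$ are bounded continuous, so all of the difficulty lies in the convective term $\int_0^t\!\!\int b_k\rho_k^\eps\nabla f\cdot\nabla g^\eps\,\d x\d s$.

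For this term I would first commute the mollifier with the gradient and, exploiting that $V^\eps$ is even, move $V^\eps$ off $\nabla g^\eps$ and onto $\nabla f\,\rho_k^\eps$. Setting $G^\eps:=\big(\sum_l a_l u_l^\eps\big)^{m-1}$, so that $g^\eps=V^\eps\ast G^\eps$ and $\nabla g^\eps=V^\eps\ast\nabla G^\eps$, this yields
\begin{equation*}
\int_{\R^d}\rho_k^\eps\nabla f\cdot\nabla g^\eps\,\d x=\int_{\R^d}\big(V^\eps\ast(\nabla f\,\rho_k^\eps)\big)\cdot\nabla G^\eps\,\d x.
\end{equation*}
I would then decompose $V^\eps\ast(\nabla f\,\rho_k^\eps)=u_k^\eps\,\nabla f+R^\eps$ with remainder
\begin{equation*}
R^\eps(x):=\int_{\R^d}V^\eps(x-y)\big(\nabla f(y)-\nabla f(x)\big)\rho_k^\eps(\d y),\qquad |R^\eps(x)|\le C\eps\|\nabla^2 f\|_\infty u_k^\eps(x),
\end{equation*}
where the bound uses the Lipschitz continuity of $\nabla f$ (available since $f\in C^2_b$) and the fact that $V^\eps$ is supported in a ball of radius $O(\eps)$. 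The task thus reduces to proving $\int_0^t\!\!\int u_k^\eps\nabla f\cdot\nabla G^\eps\to\int_0^t\!\!\int\rho_k\nabla f\cdot\nabla G$, with $G:=\big(\sum_l a_l\rho_l\big)^{m-1}$.

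The key step is then a weak--strong pairing. By Lemma \ref{Lm convergence}, $u_k^\eps\to\rho_k$ strongly in $L^m([0,T]\times\R^d)$, hence $u_k^\eps\nabla f\to\rho_k\nabla f$ strongly in $L^m$ since $\nabla f$ is bounded. I would then prove $\nabla G^\eps\rightharpoonup\nabla G$ weakly in $L^{m/(m-1)}([0,T]\times\R^d)$. The uniform bound comes from the identity
\begin{equation*}
\nabla G^\eps=\tfrac{2(m-1)}{m}\Big(\sum_l a_l u_l^\eps\Big)^{(m-2)/2}\nabla\Big(\sum_l a_l u_l^\eps\Big)^{m/2},
\end{equation*}
combined with the $L^\infty_tL^m_x$ control of $\sum_la_lu_l^\eps$ and the $L^2_tH^1_x$ control of $(\sum_la_lu_l^\eps)^{m/2}$ supplied by Lemma \ref{apriori} and Remark \ref{remark}, via Hölder with the conjugate exponents $\tfrac{2m}{m-2}$ and $2$. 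To identify the weak limit, I would show $G^\eps\to G$ strongly in $L^{m/(m-1)}$, which follows from $u_l^\eps\to\rho_l$ in $L^m$ and the pointwise inequality $|a^{m-1}-b^{m-1}|\le (m-1)(a^{m-2}+b^{m-2})|a-b|$ for $m\ge 2$, $a,b\ge 0$, together with Hölder with exponents $\tfrac{m}{m-2}$ and $m$. Uniqueness of distributional limits then upgrades this to the weak $L^{m/(m-1)}$ convergence of $\nabla G^\eps$ to $\nabla G$, and weak--strong duality yields the convergence of the main term.

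Finally, the remainder is controlled by $\big|\int_0^t\!\!\int R^\eps\cdot\nabla G^\eps\,\d x\d s\big|\le C\eps\|u_k^\eps\|_{L^\infty_tL^m_x}\|\nabla G^\eps\|_{L^1_tL^{m/(m-1)}_x}\to 0$, which closes the passage to the limit in the convective term and yields \eqref{weakk}. The principal obstacle I anticipate is precisely the identification step for $\nabla G^\eps$: compactness is available only for the \emph{sum} $\sum_la_lu_l^\eps$ and not for the individual gradients $\nabla u_k^\eps$, so the argument must proceed through the scalar quantity $G^\eps$ using solely the Sobolev-type control of $(\sum_la_lu_l^\eps)^{m/2}$ coming from the linear diffusion, rather than from any species-by-species regularity.
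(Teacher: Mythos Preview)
Your proposal is correct and follows essentially the same route as the paper: you commute the outer mollifier onto $\nabla f\,\rho_k^\eps$, control the commutator $R^\eps$ via the Lipschitz bound on $\nabla f$ and the compact support of $V^\eps$, and then pass to the limit in the main term by the weak--strong pairing $u_k^\eps\to\rho_k$ in $L^m$ versus $\nabla G^\eps\rightharpoonup\nabla G$ in $L^{m/(m-1)}$, with the weak limit identified through strong $L^{m/(m-1)}$ convergence of $G^\eps$; this is exactly the paper's decomposition into $I_1^\eps+I_2^\eps$ and its Lemma on the $L^{m/(m-1)}(0,T;W^{1,m/(m-1)})$ bound.
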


\begin{proof}
In terms of  \eqref{eps cross}, for any $f\in C^1([0,T], C^2_b(\R^d))$, we have

\begin{equation}\label{weakkeps}
\begin{aligned}
\int_{\R^d} f(t,x) \rho_k^{\eps}(t,\d x)&+  \int_0^t\int_{\R^d}\p_s f(s,x)\rho_k^\eps(s,\d x)\d s=   \int_{\R^d}f(0,x) \rho_{k,0}( x)\d x \\&+\sigma  \int_0^t \int_{\R^d} \Delta f(s,x) \rho_k^{\eps}(s,\d x) \d s
   -\int_0^t \int_{\mathbb{R}^d}b_k \nabla f(s,x) \cdot \nabla g^{\eps}( s,x) \rho_k^{\eps}(s,\d x)  \d s .
\end{aligned}
\end{equation}
Given that $\rho_k^{\eps}$ converges to $\rho_k$ in $C([0,T],\mathcal{M}(\R^d))$ as $\eps$ approaches 0, we are able to pass to the limit for the first four terms of \eqref{weakkeps}. For the last term, it has
$$
\begin{aligned}
 &\, \Big|\int_0^t \int_{\mathbb{R}^d} \nabla f(x) \cdot \nabla g^{\eps}(s, x) \rho_k^{\eps}(s,\d x)  \d s- \int_0^t \int_{\R^d} \nabla f(x) \cdot \nabla \big(\sum_la_l\rho_l(s,x)\big)^{m-1} \rho_{k}(s, x)\d x \d s  \Big| \\
\leq &\, \Big|\int_0^t \int_{\mathbb{R}^d} \nabla f(x) \cdot \nabla g^{\eps}( s,x) \rho_k^{\eps}(s,\d x)  \d s - \int_0^t \int_{\R^d} \nabla f(x) \cdot \nabla\big(\sum_la_lu_{l}^{\eps}(s, x)\big)^{m-1}u_{k}^{\eps}(s,x) \d x \d s \Big|\\
 &\, + \Big| \int_0^t \int_{\R^d} \nabla f(x) \cdot \nabla\big(\sum_la_lu_{l}^{\eps}(s, x)\big)^{m-1}u_{k}^{\eps}(s, x) \d x \d s \\ &\, \qquad\qquad-\int_0^t \int_{\R^d} \nabla f(x) \cdot \nabla \big(\sum_la_l\rho_l(s,x)\big)^{m-1} \rho_{k}(s, x)\d x \d s \Big| \\
=: &\,  I_1^\eps+I_2^\eps.
\end{aligned}
$$

For $I_1^\eps$, noticing $V^\eps(x-y)=V^\eps(y-x)$, we have 
$$
\begin{aligned}
I_1^\eps=  &\,  \Big|\int_0^t \int_{\R^d} \int_{\R^d} \nabla f(x) \cdot \nabla\big(\sum_la_lu_{l}^{\eps}(s, y)\big)^{m-1}V^{\eps}(x-y) \rho_k^{\eps}(s, \d x) \d y \d s \\
 &\,  -\int_0^t \int_{\R^d} \int_{\R^d} \nabla f(y) \cdot \nabla\big(\sum_la_lu_{l}^{\eps}(s, y)\big)^{m-1} V^{\eps}(y-x) \rho_k^{\eps}(s, \d x)\d y  \d s \Big| \\
\leq  &\,  \int_0^t \int_{\R^d} \int_{\R^d}\big|\nabla f(x)-\nabla f(y)\big|\Big|\nabla\big(\sum_la_lu_{l}^{\eps}(s,y)\big)^{m-1} \Big| V^{\eps}(x-y) \rho_k^{\eps}(s, \d x) \d y \d s \\
\leq  &\,  \left\|\nabla^2 f\right\|_{L^\infty} \int_0^t \int_{\R^d} \int_{\R^d}\Big|\nabla\big(\sum_la_lu_{l}^{\eps}(s,y)\big)^{m-1} \Big|\big|x-y\big| V^{\eps}(x-y) \rho_k^{\eps}(s,\d x) \d y \d s ;
\end{aligned}
$$
recall that $V\in C_c^\infty(\R^d)$, then 
$$
\sup_{x,y\in \operatorname{supp}V^\eps}|x-y|<C\eps,
$$
which implies that
$$
\begin{aligned}
I_1^\eps\leq   &\, \eps C\left\|\nabla^2 f\right\|_{L^\infty} \int_0^t \int_{\R^d} \Big|\nabla\big(\sum_la_lu_{l}^{\eps}(s,y)\big)^{m-1} \Big|  u_{k}^{\eps}(s,y) \d y \d s.
\end{aligned}
$$
And we have
$$
\begin{aligned}
 &\, \Big\|u_{k}^{\eps} \nabla \big(\sum_la_lu_{l}^{\eps}\big)^{m-1}\Big\|_{L^1\left([0, T] \times \R^d\right)} \\= &\,  (m-1)  \Big\|u_{k}^{\eps}(\sum_la_lu_{l}^{\eps}\big)^{m-2} \nabla \big(\sum_la_lu_{l}^{\eps}\big)\Big\|_{L^1\left([0, T] \times \R^d\right)}\\
\leq &\,  (m-1)\Big\|\big(\sum_la_lu_{l}^{\eps}\big)^{m / 2-1}\nabla \big(\sum_la_lu_{l}^{\eps}\big)\Big\|_{L^2\left([0, T] \times \R^d\right)}\Big\|u_{k}^{\eps}\big(\sum_la_lu_{l}^{\eps}\big)^{m / 2-1}\Big\|_{L^2\left([0, T] \times \R^d\right)} \\
 \leq &\, \frac{m-1}{\min_k a_k}\Big\|\big(\sum_la_lu_{l}^{\eps}\big)^{m-2}\Big|\nabla \big(\sum_la_lu_{l}^{\eps}\big)\Big|^2\Big\|_{L^1\left([0, T] \times \R^d\right)}^{1 / 2}\Big\|\sum_la_lu_{l}^{\eps}\Big\|_{L^m\left([0, T] \times \mathbb{R}^d\right)}^{m / 2},
\end{aligned}
$$
which is bounded according to a priori estimate Lemma \ref{apriori}. Thus we obtain $I_1^\eps$ goes to $0$ as $\eps\to0$. 

For $I_2^\eps$, we have
$$
\begin{aligned}
I_2^\eps\leq &\, \| \nabla f\|_{L^\infty}\int_0^t \int_{\R^d} \Big|\nabla \big(\sum_la_lu_{l}^{\eps}(s,x)\big)^{m-1} u_{k}^{\eps}(s, x)-\nabla \big(\sum_la_l\rho_{l}(s,x)\big)^{m-1} \rho_{k}(s, x)\Big| \d x\d s.
\end{aligned}
$$
To prove $I_2^\eps\to 0$ ands $\eps\to0$, we need to prove
$$
\nabla \big(\sum_la_lu_{l}^{\eps}(s,x)\big)^{m-1} u_{k}^{\eps}( s,x)\rightarrow \nabla \big(\sum_la_l\rho_{l}(s,x)\big)^{m-1} \rho_{k}(s, x)\quad\text{strongly in}\quad L^1([0,T]\times\R^d).
$$
We possess that for any $k$
\begin{equation}\label{strongm}
u_{k}^{\eps}\rightarrow \rho_{k}
\quad\text{strongly in}\quad L^m([0,T]\times\R^d) \text{ as } \eps\to0.
\end{equation}
So we only need to verify that 
\begin{equation}\label{weak}
\nabla \big(\sum_la_lu_{l}^{\eps}\big)^{m-1}\rightharpoonup \nabla\big(\sum_la_l\rho_{l}\big)^{m-1} 
\quad\text{weakly in}\quad L^{\frac{m}{m-1}}([0,T]\times\R^d) \text{ as } \eps\to0.
\end{equation}

\begin{lemma}\label{Sobolev}
 Assume $m\geq 2$, for any function $h$ such that $h\in L^{m}([0,T]\times\R^d)$ and $h^{m/2}\in L^{2}([0,T], H^1(\R^d))$, then $h^{m-1}\in L^{\frac{m}{m-1}}([0,T], W^{1,\frac{m}{m-1}}(\R^d))$.
\end{lemma}
\begin{proof} 
For $m=2$, the lemma holds trivially; for $m>2$, 
$h^{m-1}\in L^{\frac{m}{m-1}}([0,T]\times\R^d)$ is apparent, we need to prove $\nabla h^{m-1}\in L^{\frac{m}{m-1}}([0,T]\times\R^d)$ as follows
$$
\begin{aligned}
\int_0^T \int_{\R^d} \big|\nabla h^{m-1}\big| &\, ^{\frac{m}{m-1}}\d x \d s=C_m\int_0^T \int_{\R^d} \big|h^{\frac{m}{2}-1}\nabla h^{\frac{m}{2}}\big|^{\frac{m}{m-1}}\d x \d s\\
\leq &\, C_m\Big(\int_0^T \int_{\R^d}h^{(\frac{m}{2}-1)\frac{m}{m-1}\frac{2m-2}{m-2}}\d x \d s\Big)^{\frac{m-2}{2m-2}}\Big(\int_0^T \int_{\R^d}\big|\nabla h^{\frac{m}{2}}\big|^{\frac{m}{m-1}\frac{2m-2}{m}}\d x \d s\Big)^{\frac{m}{2m-2}}\\
= &\, C_m\Big(\int_0^T \int_{\R^d}h^{m}\d x \d s\Big)^{\frac{m-2}{2m-2}}\Big(\int_0^T \int_{\R^d}\big|\nabla h^{\frac{m}{2}}\big|^{2}\d x \d s\Big)^{\frac{m}{2m-2}}<\infty.
\end{aligned}
$$
\end{proof}
By Remark \ref{remark}, we have $u_{k}^{\eps}\in L^{m}([0,T]\times\R^d)$ and $(\sum_ka_ku_{k}^{\eps})^{m/2}\in L^{2}([0,T], H^1(\R^d))$, which implies the limit $\rho_k\in L^{m}([0,T]\times \R^d)$ and $(\sum_ka_k\rho_k)^{m/2}\in L^{2}([0,T], H^1(\R^d))$. 
Applying the lemma above, we have $$ \big(\sum_la_lu_{l}^{\eps}\big)^{m-1} \text{ and } \big(\sum_la_l\rho_{l}\big)^{m-1} \in L^{\frac{m}{m-1}}([0,T], W^{1,\frac{m}{m-1}}(\R^d)).$$ 
For any  test function $h\in L^m([0,T],W^{1,m}(\R^d))$, it holds 
$$
\begin{aligned}
 &\, \Big|\int_0^t \int_{\R^d} h(x,s)\Big(\nabla \big(\sum_la_lu_{l}^{\eps}(s,x)\big)^{m-1}-\nabla\big(\sum_la_l\rho_l(s,x)\big)^{m-1} \Big ) \d x\d s\Big|\\
=  &\, \Big|\int_0^t \int_{\R^d} \nabla h(x,s)\Big( \big(\sum_la_lu_{l}^{\eps}(s,x)\big)^{m-1}-\big(\sum_la_l\rho_l(s,x)\big)^{m-1} \Big ) \d x\d s\Big|\\
\leq  &\, \|\nabla  h\|_{ L^m([0,T]\times\R^d)} \Big(\int_0^t \int_{\R^d} \Big| \big(\sum_la_lu_{l}^{\eps}(s,x)\big)^{m-1}-\big(\sum_la_l\rho_l(s,x)\big)^{m-1} \Big |^{\frac{m}{m-1}} \d x\d s\Big)^{\frac{m-1}{m}},
\end{aligned}
$$
which goes to $0$ as $\eps\to0$ thanks to
$$
 \big(\sum_la_lu_{l}^{\eps}\big)^{m-1}\rightarrow \big(\sum_la_l\rho_{l}\big)^{m-1} 
\quad\text{strongly in}\quad L^{\frac{m}{m-1}}([0,T]\times\R^d).
$$
Now for any $h\in L^m([0,T]\times\R^d)$, it can be approximated by a sequence \\$(h_n)_{n\geq 1}\in L^m([0,T],W^{1,m}(\R^d))$.  Thus 
$$
\int_0^t \int_{\R^d}h\nabla \big(\sum_la_lu_{l}^{\eps}(s,x)\big)^{m-1}\d x\d s\rightarrow \int_0^t \int_{\R^d}h\nabla\big(\sum_la_l\rho_{l}(s,x)\big)^{m-1} \d x\d s, \text{ as } \eps\to0,
$$ which is due to both  $\big(\sum_lu_{l}^{\eps}\big)^{m-1}$ and $\big(\sum_l\rho_{l}\big)^{m-1}$ are bounded in $L^{\frac{m}{m-1}}([0,T], W^{1,\frac{m}{m-1}}(\R^d))$ and the dominated convergence theorem enables to let $h_n\to h$. Hence, the weak convergence as expressed in \eqref{weak} is achieved as desired. Combining \eqref{strongm} and \eqref{weak}, we obtain $I_2^\eps\to0$.

Thus we finish the proof of Proposition \ref{pass}.
\end{proof}

In conclusion, $\rho=(\rho_1,\ldots,\rho_n)$ is a weak solution of the cross-diffusion system \eqref{cross-diffusion1}.

\section{Proof of Theorem \ref{uniqueness}}\label{proof of uniqueness}

In this section, we will prove Theorem \ref{uniqueness} under the assumption that mobilities $b_1,\ldots,b_k$ are the same. Namely, the cross-diffusion equation \eqref{cross-diffusion1} reduces to the following system 
\begin{equation}\label{eq same b}
\begin{cases}
\p_t \rho_k-b\nabla\cdot \big(\rho_{k}\nabla P(\rho)\big)=\sigma\Delta \rho_{k},\\
\rho_{k}(0)=\rho_{k,0}, \quad \rho_{k,0}\in  L^1\cap L^\infty(\R^d),
\end{cases}k=1,2,\ldots,n,
\end{equation}
with the pressure defined as
$$
P(\rho)=\Big(\sum_{l=1}^{n}a_l\rho_l\Big)^{m-1},\quad m\geq 2.
$$ 
If we sum up the cross-diffusion system with weights $a_k$, then it holds
$$
\p_t( \sum_la_l\rho_l)=b\nabla\cdot \big(\sum_la_l\rho_l\nabla P(\rho)\big)+\sigma\Delta \sum_la_l\rho_l.
$$
Let 
$
u=\sum_la_l\rho_l$ with $ u_0=\sum_la_l\rho_{l,0}$,
which satisfies formally that
\begin{equation}\label{u}
\p_t u=b\nabla\cdot(u\nabla u^{m-1})+\sigma\Delta u,
\end{equation}
where $u_0\in L^1\cap L^\infty(\R^d)$. Weak solutions of \eqref{u} are understood in sense of the following definition. %without loss of the generality, we let $b=\sigma=1$.
\begin{definition}\label{def weak u}
A weak solution of \eqref{u} on the time interval $[0,T]$ satisfies $u\in L^{m}([0,T]\times\R^d)$ and $u^{m-1}\in  L^{\frac{m}{m-1}}(0,T;W^{1,\frac{m}{m-1}}(\R^d))$. And
    for any $f\in C^1([0,T], C^2_b(\R^d))$ and almost any $t\in[0,T]$, it holds
\begin{equation}\label{eq weak u}
\begin{aligned}
      &\int_{\R^d}f(t,x)u(t,x)
\d x+  \int_0^t\int_{\R^d}u(s,x)\p_s f(s,x)\d x\d s=\int_{\R^d} f(0,x)u_{0}(x)\d x   \\
 &\, +\sigma \int_0^t\int_{\R^d}\Delta f(s, x) u(s,x)\d x\d s  -b\int_0^t\int_{\R^d}u(s,x)\nabla f(s,x) \cdot\nabla u^{m-1}(s,x) \d x\d s.
    \end{aligned}     
\end{equation}  
\end{definition}

Given $\rho_{k,0}\in L^1\cap L^\infty(\R^d)$ for $k=1,\ldots,n$, if the weak solution of \eqref{u} defined as Definition \ref{def weak u} is unique with initial data $ u_0=\sum_ka_k\rho_{k,0}$, then the pressure of the cross-diffusion \eqref{eq same b} satisfied by $P(\rho)=u^{m-1}$ is uniquely determined with certain regularity. Thus, each $\rho_k$ satisfies the linear Fokker-Planck equation
$$
\p_t \rho_k=b\nabla\cdot(\rho_k \nabla u^{m-1})+\sigma \Delta \rho_k,
$$
which has a unique weak solution $\rho_k\in L^m([0,T]\times\R^d)$ in sense of Definition \ref{definition} \cite[Theorem 9.3.6]{bogachev2022fokker}. Since a priori estimate of \eqref{u} tells us
$$
\frac{\|u(T)\|_{L^m}}{m}+b\int_0^T\int_{\R^d}u|\nabla u^{m-1}|^2\d x\d t+\sigma(m-1)\int_0^T\int_{\R^d} u^{m-2}|\nabla u|^2 \d x\d t=\frac{\|u(0)\|_{L^m}}{m},
$$
which can be made rigorous by regularising first and passing to the limit then. It implies that, for any $k$, the vector field $\nabla u^{m-1}\in L^2([0,T]\times\R^d, \rho_k\d x\d t)$, i.e.,
$$
\int_0^T\int_{\R^d}\rho_k|\nabla u^{m-1}|^2\d x\d t\leq \int_0^T\int_{\R^d}u|\nabla u^{m-1}|^2\d x\d t<\infty.
$$
It also implies $\nabla u^{m-1}\in L^1([0,T]\times\R^d, \rho_k\d x\d t)$ as follows
$$
\begin{aligned}
&\int_0^T\int_{\R^d}\rho_k|\nabla u^{m-1}|\d x\d t\leq \int_0^T\int_{\R^d}u|\nabla u^{m-1}|\d x\d t\\\leq&\,\|u\|_{L^1([0,T]\times\R^d)}^{\frac{1}{2}}\Big(\int_0^T\int_{\R^d}u|\nabla u^{m-1}|^2\d x\d t\Big)^{\frac{1}{2}}<\infty,
\end{aligned}
$$ which verifies the condition needed in \cite[Theorem 9.3.6]{bogachev2022fokker}.

Therefore, to show Theorem \ref{uniqueness}, it is sufficient to prove the following proposition, which is well-known \cite{FP08,Va} but we adapt it here to our regularity setting.

\begin{proposition}\label{prop unique u}
The weak solution of \eqref{u} defined as Definition \ref{def weak u} is unique.   
\end{proposition}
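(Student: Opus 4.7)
The plan is to run a duality argument of Oleinik type, exploiting the fact that the linear diffusion $\sigma\Delta$ renders the linearization around two solutions uniformly parabolic. After an integration by parts using $u\nabla u^{m-1}=\frac{m-1}{m}\nabla u^m$, equation \eqref{u} is rewritten weakly as $\p_t u=\Delta\Phi(u)$ with the strictly increasing flux $\Phi(u):=\sigma u+\frac{b(m-1)}{m}u^m$. Given two weak solutions $u_1,u_2$ sharing the initial datum $u_0$, I set $w:=u_1-u_2$ and factor the difference of fluxes via the mean value theorem as
$$\Phi(u_1)-\Phi(u_2)\;=\;c(s,x)\,w,\qquad c(s,x)\;:=\;\sigma+b(m-1)\int_0^1\bigl(\theta u_1+(1-\theta)u_2\bigr)^{m-1}\d\theta,$$
so that $c\geq\sigma>0$ pointwise. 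Subtracting the two weak formulations yields, for every $f\in C^1([0,T],C_b^2(\R^d))$ with $f(T)=0$,
$$\int_0^T\!\!\int_{\R^d}w\,\bigl(\p_s f+c\,\Delta f\bigr)\,\d x\,\d s\;=\;0.$$

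\textbf{Dual problem and pairing.} Next, for an arbitrary $\psi\in C_c^\infty((0,T)\times\R^d)$, I introduce a smooth truncation $c_\eps$ satisfying $\sigma\leq c_\eps\leq M_\eps$, $c_\eps\to c$ almost everywhere and $M_\eps\to\infty$, and solve the backward linear Cauchy problem
$$\p_s f_\eps+c_\eps\Delta f_\eps\;=\;\psi,\qquad f_\eps(T,\cdot)\;=\;0.$$
This problem is well-posed (forward parabolic in reversed time, uniformly parabolic since $c_\eps\geq\sigma$) and admits a smooth solution. The maximum principle gives $\|f_\eps\|_{L^\infty}\leq T\|\psi\|_{L^\infty}$, and testing the equation against $\Delta f_\eps$ combined with $c_\eps\geq\sigma$ yields a uniform energy bound $\sigma\|\Delta f_\eps\|_{L^2([0,T]\times\R^d)}^2\leq C(\psi)$. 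Substituting $f_\eps$ in the preceding identity, and using $\p_s f_\eps+c\Delta f_\eps=\psi+(c-c_\eps)\Delta f_\eps$, produces
$$\int_0^T\!\!\int_{\R^d}w\,\psi\,\d x\,\d s\;=\;\int_0^T\!\!\int_{\R^d}(c_\eps-c)\,w\,\Delta f_\eps\,\d x\,\d s,$$
so that letting $\eps\to 0$ the right-hand side vanishes and $\int w\,\psi=0$ for every $\psi$, hence $w\equiv 0$.

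\textbf{Main obstacle.} The delicate point is the passage to the limit in the error term $\int(c_\eps-c)w\,\Delta f_\eps$. Indeed, $c$ is controlled only by $u_1^{m-1}+u_2^{m-1}\in L^{m/(m-1)}$, while $\Delta f_\eps$ lies in $L^2$ and $w$ in $L^m$; reconciling these scales requires a joint choice of the truncation level $M_\eps\to\infty$ together with a Chebyshev-plus-H\"{o}lder estimate on the set $\{c>M_\eps\}$. A cleaner route, essentially the one adopted in \cite{FP08,Va}, is to first upgrade the weak solution to the $L^\infty$ bound $\|u(t)\|_{L^\infty}\leq\|u_0\|_{L^\infty}$ by a maximum-principle argument on a regularized equation and passing to the limit; once $c$ is bounded the duality becomes entirely standard and additionally yields the $L^1$-contraction $\|u_1(t)-u_2(t)\|_{L^1}\leq\|u_1(0)-u_2(0)\|_{L^1}$ as a by-product.
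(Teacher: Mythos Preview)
Your Oleinik duality outline is classical and correct in spirit, but it is genuinely different from the paper's route and, as written, leaves the decisive step unresolved. The paper does \emph{not} solve a backward dual problem at all. Instead, after rewriting the equation as $\p_t u=\Delta\Phi(u)$ with $\Phi(u)=\sigma u+\tfrac{b(m-1)}{m}u^m$ (as you do), it mollifies and plugs in the \emph{explicit} test function
\[
f(t,x)\;=\;\int_t^T\bigl(\Phi(u)-\Phi(\bar u)\bigr)\ast V^\eps\,\d s
\]
into the weak formulation of the difference $u_\eps-\bar u_\eps$. The second-order term and the nonlinear term then combine into a perfect square $-\tfrac12\int_{\R^d}\bigl|\int_0^T\nabla(\Phi(u)-\Phi(\bar u))\ast V^\eps\,\d t\bigr|^2\d x\leq 0$, while the left-hand side becomes $\int_0^T\!\int_{\R^d}(u_\eps-\bar u_\eps)\,\bigl(\Phi(u)-\Phi(\bar u)\bigr)\ast V^\eps$. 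Passing $\eps\to 0$ and using the strict monotonicity of $\Phi$ gives $u=\bar u$. The only preliminary needed is $u\in L^{m+1}([0,T]\times\R^d)$, which the paper obtains by the same test-function trick applied to a single solution (with $f=\int_t^T\Phi(u)\ast V^\eps\,\d s$) together with $u_0\in L^\infty$. No $L^\infty$ bound on $u$ and no auxiliary parabolic problem are required.

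By contrast, your argument hinges on controlling $\int(c_\eps-c)\,w\,\Delta f_\eps$, which you yourself flag as the obstacle; with only $c\in L^{m/(m-1)}$, $w\in L^m$ and $\Delta f_\eps\in L^2$ the H\"older exponents do not close, and the ``cleaner route'' you propose---first proving $\|u(t)\|_{L^\infty}\leq\|u_0\|_{L^\infty}$---is itself a nontrivial maximum-principle statement for weak solutions in the regularity class of Definition~\ref{def weak u} that you do not carry out. Both strategies ultimately work, but the paper's explicit test function sidesteps precisely the integrability mismatch that blocks your limit, and stays entirely within the regularity already available.
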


\begin{proof}
Without loss of the generality, we assume $b=\sigma=1$ in the proof. The first step is to show $u\in L^{m+1}([0,T]\times \R^d)$. Recall the smooth mollifier $V^\eps$ defined as in \eqref{eps cross} and let $$u_\eps(t,\cdot)=u(t,\cdot)\ast V^\eps,$$ then it holds in sense of weak form \eqref{eq weak u} that 
$$
\p_t u_\eps=\nabla\cdot \big((u\nabla u^{m-1})\ast V^\eps\big)+\Delta u\ast V^\eps.
$$
We take smooth test function
$$
f(x,t)=\int_t^T(\frac{m-1}{m}u^m+u)\ast V^\eps \d s
$$
with
$$
\nabla f(x,t)=\int_t^T(u\nabla u^{m-1}+\nabla u)\ast V^\eps \d s.
$$
We plug in $f$ into \eqref{eq weak u}. The right-hand side of the equality follows 
$$
\begin{aligned}
  &\int_{\R^d} f(0,x)u_\eps(0,x)\d x    +\int_0^T\int_{\R^d} u_\eps\Delta f\d x\d t  -\int_0^T\int_{\R^d}\nabla f \cdot\big((u\nabla u^{m-1})\ast V^\eps\big)\d x\d t\\
%  = &-\int_0^T\int_{\R^d}\big((u\nabla u^{m-1}+\nabla u)\ast V^\eps\big)  \Big(\int_t^T\nabla\big((\frac{m-1}{m}u^m+u)\ast V^\eps\big) \d s\Big) \d x\d t\\
  =&\int_{\R^d} u_\eps(0)\int_0^T\big(\frac{m-1}{m}u^m+u\big)\ast V^\eps\d t \\&-\int_0^T\int_{\R^d}\big((u\nabla u^{m-1}+\nabla u)\ast V^\eps\big)  \Big(\int_t^T\big((u\nabla u^{m-1}+\nabla u)\ast V^\eps\big) \d s\Big) \d x\d t\\=&\int_{\R^d} u_\eps(0)\int_0^T\big(\frac{m-1}{m}u^m+u\big)\ast V^\eps\d t -\frac{1}{2}\int_{\R^d}\Big|\int_0^T\big((u\nabla u^{m-1}+\nabla u)\ast V^\eps\big) \d t \Big|^2 \d x,
\end{aligned}
$$
where we used the integral by parts once for the second order term and the last step comes from the Fubini's theorem. While the left-hand side of \eqref{eq weak u} yields
$$
\begin{aligned}
&\int_{\R^d}f(T,x)u_\eps(T,x)
\d x+  \int_0^T\int_{\R^d}u_\eps(t,x)\p_t f(t,x)\d x\d t\\
=&-\int_0^T\int_{\R^d} u_\eps  \p_t\Big(\int_t^T(\frac{m-1}{m}u^m+u)\ast V^\eps \d s\Big) \d x\d t\\
=&\int_0^T\int_{\R^d} u_\eps  (\frac{m-1}{m}u^m+u)\ast V^\eps  \d x\d t.
\end{aligned}
$$
Combining both identities above with the fact $u_\eps^m\leq u^m\ast V^\eps$ due to Jensen's inequality, we get that
$$
\begin{aligned}
&\frac{m-1}{m}\int_0^T\int_{\R^d} (u_\eps )^{m+1}   \d x\d t +\int_0^T\int_{\R^d} (u_\eps )^{2}   \d x\d t +\frac{1}{2}\int_{\R^d}\Big|\int_0^T\big((u\nabla u^{m-1}+\nabla u)\ast V^\eps\big) \d t \Big|^2 \d x\\
\leq & \int_{\R^d} u_\eps(0)\int_0^T\big(\frac{m-1}{m}u^m+u\big)\ast V^\eps\d t\d x\\
\leq &\|u_\eps(0)\|_{L^\infty}\int_{\R^d}\int_0^T\big(\frac{m-1}{m}u^m+u\big)\ast V^\eps\d t\d x\\
\leq &\|u_\eps(0)\|_{L^\infty}\big(\frac{m-1}{m}\|u\|_{L^m([0,T]\times\R^d)}+\|u\|_{L^1([0,T]\times\R^d)}\big).
\end{aligned}
$$
Taking the limit as $\eps\to0$, we obtain $u\in L^{m+1}(\R^d\times [0,T])$.

Similar as the argument above,  we assume that two solutions $u$ and $\bar u$ of \eqref{u} have the same initial data i.e., $u(0)=\bar u(0)$. Their regularised versions $u_\eps:=u\ast V^\eps$ and $\bar u_\eps:=\bar u\ast V^\eps$ satisfy the following equation
\begin{equation}\label{difference}
\p_t (u_\eps-\bar u_\eps)=\nabla\cdot \big((u\nabla u^{m-1}-\bar u\nabla\bar u^{m-1})\ast V^\eps\big)+\Delta (u_\eps-\bar u_\eps).
\end{equation}
We take test function 
$$
f=\int_t^T(\frac{m-1}{m}u^m+u-\frac{m-1}{m}\bar u^m-\bar u)\ast V^\eps \d s,
$$
and plug in it into the weak form of \eqref{difference} as follows
\begin{equation}\label{eq weak differ}
\begin{aligned}
&\int_{\R^d}f(T,x)(u_\eps(T,x)-\bar u_\eps(T,x))
\d x+  \int_0^T\int_{\R^d}(u_\eps(t,x)-\bar u_\eps(t,x))\p_t f(t,x)\d x\d t\\=&\,\int_{\R^d} f(0,x)(u_\eps(0)-\bar u_\eps(0)) \d x    +\int_0^T\int_{\R^d} (u_\eps-\bar u_\eps)\Delta f\d x\d t \\& -\int_0^T\int_{\R^d}\nabla f \cdot\big((u\nabla u^{m-1})\ast V^\eps-(\bar u\nabla \bar u^{m-1})\ast V^\eps\big)\d x\d t.    
\end{aligned}
\end{equation}  The right-hand side reads as
$$
\begin{aligned}
&\int_{\R^d} f(0,x)(u_\eps(0)-\bar u_\eps(0)) \d x    +\int_0^T\int_{\R^d} (u_\eps-\bar u_\eps)\Delta f\d x\d t \\& -\int_0^T\int_{\R^d}\nabla f \cdot\big((u\nabla u^{m-1})\ast V^\eps-(\bar u\nabla \bar u^{m-1})\ast V^\eps\big)\d x\d t\\
  = &-\int_0^T\int_{\R^d}\big((u\nabla u^{m-1}+\nabla u-\bar u\nabla\bar u^{m-1}-\nabla\bar u)\ast V^\eps\big)  \\&\times\Big(\int_t^T(u\nabla u^{m-1}+\nabla u-\bar u\nabla\bar u^{m-1}-\nabla\bar u)\ast V^\eps\big) \d s\Big) \d x\d t\\
  = &-\frac{1}{2}\int_{\R^d}\Big|\int_0^T\big((u\nabla u^{m-1}+\nabla u-\bar u\nabla\bar u^{m-1}-\nabla\bar u)\ast V^\eps\big) \d t \Big|^2 \d x.
\end{aligned}
$$
And the left-hand side of the identity \eqref{eq weak differ} yields
$$
\begin{aligned}
&\int_{\R^d}f(T,x)(u_\eps(T,x)-\bar u_\eps(T,x))
\d x+  \int_0^T\int_{\R^d}(u_\eps(t,x)-\bar u_\eps(t,x))\p_t f(t,x)\d x\d t\\
%&\int_0^T\int_{\R^d}\p_t (u_\eps-\bar u_\eps)  \Big(\int_t^T(\frac{m-1}{m}u^m+u-\frac{m-1}{m}\bar u^m-\bar u)\ast V^\eps \d s\Big) \d x\d t\\
=&\int_0^T\int_{\R^d} (u_\eps-\bar u_\eps) (\frac{m-1}{m}u^m+u-\frac{m-1}{m}\bar u^m-\bar u)\ast V^\eps  \d x\d t.
\end{aligned}
$$
Namely, we have 
$$
\begin{aligned}
&\int_0^T\int_{\R^d} (u_\eps-\bar u_\eps) (\frac{m-1}{m}u^m+u-\frac{m-1}{m}\bar u^m-\bar u)\ast V^\eps  \d x\d t\\=& -\frac{1}{2}\int_{\R^d}\Big|\int_0^T\big((u\nabla u^{m-1}+\nabla u-\bar u\nabla\bar u^{m-1}-\nabla\bar u)\ast V^\eps\big) \d t \Big|^2 \d x   \leq0,
\end{aligned}
$$
where we take the limit $\eps\to0$ and infer that 
$$
\begin{aligned}
&\int_0^T\int_{\R^d} (u-\bar u) (\frac{m-1}{m}u^m+u-\frac{m-1}{m}\bar u^m-\bar u) \d x\d t \leq0.
\end{aligned}
$$
But convexity of the function $\frac{m-1}{m}u^m+u$ implies 
$$
 (u-\bar u) (\frac{m-1}{m}u^m+u-\frac{m-1}{m}\bar u^m-\bar u)\geq 0
$$ holds everywhere,
which is also integrable because $u\in L^{m+1}([0,T]\times \R^d)$.
Therefore, it follows $u=\bar u$ almost everywhere.
    
\end{proof}

\section*{Acknowledgments}
The research of JAC was supported by the Advanced Grant Nonlocal\--CPD (Non\-local PDEs for Complex Particle Dynamics: Phase Transitions, Patterns and Synchronization) of the European Research Council Executive Agency (ERC) under the European Union’s Horizon 2020 research and innovation programme (grant agreement No. 883363).
JAC was also partially supported by EPSRC grant numbers EP/T022132/1 and EP/V051121/1.

\bibliography{reference}
\bibliographystyle{abbrv}

\end{document}